\documentclass[draft]{amsart}
\usepackage{amsmath}
\usepackage{amssymb}
\usepackage{graphicx}
\newtheorem{theorem}{Theorem}[section]
\newtheorem{lemma}[theorem]{Lemma}

\theoremstyle{definition}

\newtheorem{remark}[theorem]{Remark}

\numberwithin{equation}{section}

\begin{document}

\title[On the optimal Voronoi partitions for Ahlfors-David measures]{On the optimal Voronoi partitions for Ahlfors-David measures with respect to the geometric mean error}
\author{Sanguo Zhu, Youming Zhou}

\address{School of Mathematics and Physics, Jiangsu University
of Technology\\ Changzhou 213001, China.}
\email{sgzhu@jsut.edu.cn}

\subjclass[2000]{Primary 28A80, 28A78; Secondary 94A15}
\keywords{Ahlfors-David measures, geometric mean error, Voronoi partition.}

\begin{abstract}
Let $\mu$ be an Ahlfors-David probability measure on $\mathbb{R}^q$ with support $K_\mu$. For every $n\geq 1$, let $C_n(\mu)$ denote the collection of all the $n$-optimal sets for $\mu$ with respect to the geometric mean error. We prove that, there exist constants $d_1,d_2>0$, such that for each $n\geq 1$, every $\alpha_n\in C_n(\mu)$ and an arbitrary Voronoi partition $\{P_a(\alpha_n)\}_{a\in\alpha_n}$ with respect to $\alpha_n$, we have
\[
d_1n^{-1}\leq\min_{a\in\alpha_n}\mu(P_a(\alpha_n))\leq\max_{a\in\alpha_n}\mu(P_a(\alpha_n))\leq d_2n^{-1}.
\]
Moreover, we prove that for each $a\in\alpha_n$, the set $P_a(\alpha_n)$ contains a closed ball of radius $d_3|P_a(\alpha_n)\cap K_\mu|$ which is centered at $a$, where $d_3$ is a constant and $|B|$ denotes the diameter of a set $B\subset\mathbb{R}^q$. Some estimates for the measure and the geometrical size of the elements of a Voronoi partition with respect to an $n$-optimal set are established in a more general context. These estimates also allow us to drop an additional condition in our previous work on self-similar measures.
\end{abstract}

\maketitle

\section{Introduction}

Let $\nu$ be a Borel probability measure on $\mathbb{R}^q$. The quantization problem for $\nu$ is concerned with the approximation of $\nu$ by discrete measures of finite support in $L_r$-metrics. This problem has a deep background in information theory and engineering technology such as signal processing and pattern recognition \cite{BW:82,GN:98}. We refer to \cite{GL:00,GL:04} for rigorous mathematical theory of the quantization problem. In the past decades, this problem has attracted great interest of mathematicians (cf. \cite{GL:00,GL:04,GL:08,GL:12,GPM:02,KZ:15,KZ:16,Kr:13,PG:97,PK:01,PK:04}).
\subsection{Some definitions and basic facts}
Let $r\in [0,\infty)$ and $k\in\mathbb{N}$. Let $d$ denote the Euclidean metric on $\mathbb{R}^q$. For every $k\geq 1$, let $\mathcal{D}_k:=\{\alpha\subset\mathbb{R}^q:1\leq {\rm card}(\alpha)\leq k\}$. For $x\in\mathbb{R}^q$ and $\alpha\subset\mathbb{R}^q$, let $d(x,\alpha):=\inf_{a\in\alpha}d(x,a)$. The $k$th quantization error for $\nu$ of order $r$ can be defined by
\begin{eqnarray}\label{quanerror}
e_{k,r}(\nu)=\left\{\begin{array}{ll}\big(\inf\limits_{\alpha\in\mathcal{D}_k}\int d(x,\alpha)^{r}d\nu(x)\big)^{1/r}&r>0\\
\inf\limits_{\alpha\in\mathcal{D}_k}\exp\big(\int\log d(x,\alpha)d\nu(x)\big)&r=0\end{array}\right..
\end{eqnarray}
One may see \cite{GL:00} for some equivalent definitions and interpretations in various contexts. For $r\in[1,\infty)$, $e_{n,r}(\nu)$ is equal to the minimum error when approximating $\nu$ by discrete probability measures supported on at most $n$ points in the $L_r$-metrics.

A set $\alpha\in\mathcal{D}_k$ is called a $k$-optimal set for $\nu$ of order $r$, if the infimum in (\ref{quanerror}) is attained at $\alpha$. We call the points in such an $\alpha$ $k$-\emph{optimal points}. As in \cite{GL:00,GL:04}, we denote the collection of all the $k$-optimal sets for $\nu$ of order $r$ by $C_{k,r}(\nu)$ and simply write $C_k(\nu)$ for $C_{k,0}(\nu)$. For $r>0$, $C_{k,r}(\nu)\neq\emptyset$ if $\int |x|^rd\nu(x)<\infty$; and $C_k(\nu)$ is non-empty if the following condition is satisfied:
\[
\int_0^1 s^{-1}\sup_{x\in\mathbb{R}^q}\nu(B(x,s))ds<\infty.
\]
In particular, $C_k(\nu)\neq\emptyset$ if for some constants $C,t>0$, we have
\[
\sup_{x\in\mathbb{R}^q}\nu(B(x,\epsilon))\leq C\epsilon^t
 \]
for every $\epsilon>0$. Furthermore, whenever the support $K_\nu$ of $\nu$ is an infinite set, we have that $e_{n,r}(\nu)$ is strictly decreasing as $n$ increases and ${\rm card}(\alpha_n)=n$ for every $\alpha_n\in C_{n,r}(\nu)$. One can see Theorem 4.12 of \cite{GL:00} and Theorem 2.4 of \cite{GL:04} for more detailed information.

Let $\alpha$ be a non-empty finite subset of $\mathbb{R}^q$.  For each $a\in\alpha$, the Voronoi region  generated by $a$ with respect to $\alpha$ is defined by
\begin{equation}\label{vregion}
W(a|\alpha):=\{x\in\mathbb{R}^q:d(x,\alpha)=d(x,a)\}.
\end{equation}
A Voronoi partition (VP) with respect to  the set $\alpha$ is defined to be a Borel partition $\{P_a(\alpha)\}_{a\in\alpha}$ of $\mathbb{R}^q$ such that
\[
P_a(\alpha)\subset W(a|\alpha),\;a\in\alpha.
\]
Let us call a VP with respect to an $\alpha\in C_{k,r}(\nu)$ a $k$-\emph{optimal Voronoi partition}.

For a Borel set $A\subset\mathbb{R}^q$ and a non-empty finite subset $\alpha$ of $\mathbb{R}^q$, we define
\begin{eqnarray}
I_{\nu,r}(A,\alpha):=\left\{\begin{array}{ll}\int_{A}d(x,\alpha)^rd\nu(x)&r>0\\
\int_{A}\log d(x,\alpha)d\nu(x)&r=0\end{array}\right..
\end{eqnarray}
In the following, we simply write $I_\nu(A,\alpha)$ for $I_{\nu,0}(A,\alpha)$ for convenience.
\subsection{A significant concern about the Voronoi partition}
Let $\nu$ be an absolutely continuous probability measure on $\mathbb{R}^q$. In \cite{Ger:79}, Gersho conjectured that for $\alpha_n\in C_{n,r}(\nu)$ and an arbitrary VP $\{P_a(\alpha_n)\}_{a\in\alpha_n}$ with respect to $\alpha_n$, we have
\begin{equation}\label{gersho}
\lim_{n\to\infty}\frac{I_{\nu,r}(P_a(\alpha_n),\{a\})}{n^{-1}e_{n,r}^r(\nu)}=1,\;\;a\in\alpha_n.
\end{equation}
This conjecture is also significant for singular  Borel probability measures.

In \cite{GL:12}, Graf, Luschgy and Pag\`{e}s proved that, for a large class of absolutely continuous probability measures $\nu$, there exist constants $A_1,A_2>0$ such that
\begin{equation}\label{g5}
A_1n^{-1}e_{n,r}^r(\nu)\leq I_{\nu,r}(P_a(\alpha_n),\{a\})\leq A_2n^{-1}e_{n,r}^r(\nu),\;\;a\in\alpha_n.
\end{equation}
This is a weak version of (\ref{gersho}). One may see \cite{Kr:13,Zhu:20} for some other related work. We remark that for general probability measures, it is very difficult even to examine whether (\ref{g5}) holds.

It is known from \cite{GL:04} that $e_{n,r}(\nu)\to e_n(\nu)$ as $r$ decreases to zero. Thus, by letting $r\to 0$ in (\ref{g5}), it is natural to conjecture that, for a Borel probability measure $\nu$, there exist some constants $B_1,B_2$ such that, for an arbitrary $\alpha_n\in C_n(\nu)$ and an arbitrary VP $(P_a(\alpha_n))_{a\in\alpha_n}$, the following holds:
\begin{equation}\label{wgc}
B_1n^{-1}\leq\min_{a\in\alpha_n}\nu(P_a(\alpha_n))\leq\max_{a\in\alpha_n}\nu(P_a(\alpha_n))\leq B_2n^{-1},\;\;a\in\alpha_n.
\end{equation}
This can be regarded as a limiting case of the weak version (\ref{g5}).
\subsection{Statement of the main result}
A Borel measure $\mu$ $\mathbb{R}^q$ is called an $s_0$-dimensional Ahlfors-David measure if there exists some $\epsilon_0>0$ such that, for every $x\in{\rm supp}(\mu)$,
\begin{equation}\label{AD}
C_1\epsilon^{s_0}\leq\mu(B(x,\epsilon))\leq C_2\epsilon^{s_0},\;\epsilon\in(0,\epsilon_0).
\end{equation}
The asymptotics of the quantization errors for Ahlfors-David measures have been studied in detail by Graf and Luschgy (cf. \cite[Theorem 12.18]{GL:00}). One can also see \cite{GL:00,Mattila:95} for various examples of such measures.

In the remaining part of the paper, we always denote by $\mu$ a probability measure satisfying (\ref{AD}). In addition, by Lemma 12.3 of \cite{GL:00}, we assume that the second inequality in (\ref{AD}) holds for all $x\in\mathbb{R}^q$ and all $\epsilon>0$. For a set $B\subset\mathbb{R}^q$, we denote the diameter of $B$ by $|B|$. We will prove
\begin{theorem}\label{mthm}
Let $\mu$ be an Ahlfors-David probability measure on $\mathbb{R}^q$ with support $K_\mu$. There exist positive constants $d_1, d_2, d_3$, such that for each $n\geq 1$, every $\alpha_n\in C_n(\mu)$ and an arbitrary VP $\{P_a(\alpha_n)\}_{a\in\alpha_n}$, we have
\[
d_1n^{-1}\leq\min_{a\in\alpha_n}\mu(P_a(\alpha_n))\leq\max_{a\in\alpha_n}\mu(P_a(\alpha_n))\leq d_2n^{-1}.
\]
Moreover, for every $a\in\alpha_n$, $P_a(\alpha_n)$ contains a ball of radius $d_3|P_a(\alpha_n)\cap K_\mu|$ which is centered at $a$.
\end{theorem}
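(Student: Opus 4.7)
The plan is to combine the asymptotics $e_{n,0}(\mu)\asymp n^{-1/s_0}$ from \cite[Theorem~12.18]{GL:00} with exchange/replacement arguments on sets in $C_n(\mu)$, and then translate everything into measure estimates via the two-sided bound (\ref{AD}). I would organise the proof in three stages: a uniform diameter bound on $P_a(\alpha_n)\cap K_\mu$, then the ball containment of the last assertion, and finally the two-sided measure estimate.

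For the diameter bound I would show $|P_a(\alpha_n)\cap K_\mu|\leq Mn^{-1/s_0}$ for an absolute constant $M$. Fix $a$ and pick $x_0\in P_a\cap K_\mu$ with $d(x_0,a)$ of the order of the diameter. Insert $x_0$ and simultaneously delete a redundant point from $\alpha_n$ (produced by pigeonhole on $\sum_a\mu(P_a)=1$ together with a rough a priori upper bound on $\mu(P_a)$), obtaining a set $\beta$ of cardinality $n$ whose log-distance functional may be compared with the optimal value $\log e_{n,0}(\mu)$. On a small ball $B(x_0,\rho)$ with $\rho$ a fixed fraction of $d(x_0,a)$, the integrand $\log d(x,\alpha_n)-\log d(x,\beta)$ is bounded below by a positive universal constant, while (\ref{AD}) gives $\mu(B(x_0,\rho))\geq C_1\rho^{s_0}$. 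Since this local improvement cannot exceed the loss from the deletion plus any slack in optimality, a careful bookkeeping pins down $d(x_0,a)^{s_0}$ to order $1/n$.

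For the ball containment, argue by contradiction: if $b\in\alpha_n\setminus\{a\}$ lies within $2d_3|P_a\cap K_\mu|$ of $a$ for a constant $d_3$ to be chosen, replace $b$ by a point $c$ placed near a far extreme of $P_a\cap K_\mu$. The gain from inserting $c$ is bounded below by a constant times $|P_a\cap K_\mu|^{s_0}$ exactly as in the preceding stage. The cost of deleting $b$, namely $\int_{P_b}\log(d(x,\alpha_n\setminus\{b\})/d(x,b))\,d\mu$, is split into an annular near-$b$ integral (controlled by the layer-cake formula and the upper half of (\ref{AD})) and a far-from-$b$ remainder (bounded by $(\log 2)\,\mu(P_b)$). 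Using the diameter bound on $P_b$ and the proximity of $a$ to $b$ in the annular regime, this cost comes out to at most a constant times $d_3^{s_0}|P_a\cap K_\mu|^{s_0}$; choosing $d_3$ sufficiently small makes the gain strictly exceed the cost, contradicting the optimality of $\alpha_n$.

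The measure bounds then follow. The upper estimate $\mu(P_a)\leq d_2/n$ is immediate from the diameter bound and the upper half of (\ref{AD}). For the lower estimate, the mirror of the first-stage argument yields $|P_a\cap K_\mu|\geq cn^{-1/s_0}$: if some cell had a much smaller diameter, deleting its centre and inserting a new point in the cell of largest diameter (which is itself of order $n^{-1/s_0}$ by pigeonhole against the just-proved upper bound) would strictly improve the functional. A standard short argument using the Voronoi barycentre condition forces $a\in\overline{K_\mu}$, and the ball containment together with the lower half of (\ref{AD}) then gives $\mu(P_a)\geq C_1(d_3cn^{-1/s_0})^{s_0}=d_1/n$. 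The main obstacle will be the ball-containment step, where gain and loss are both of order $|P_a\cap K_\mu|^{s_0}$: one must track constants quantitatively and use the diameter bound to control the log-quotient $d(x,\alpha_n\setminus\{b\})/d(x,b)$ in the annulus near $b$.
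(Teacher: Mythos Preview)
Your plan is the natural $L_r$-style exchange argument, but for $r=0$ it has two genuine gaps that the paper's machinery is specifically designed to avoid.

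\textbf{The diameter bound loses a logarithm.} In your first stage you want to bound the cost of deleting a ``redundant'' point $b$ with $\mu(P_b)\le 1/n$. For the geometric mean error that cost is
\[
\int_{P_b}\bigl(\log d(x,\alpha_n\setminus\{b\})-\log d(x,b)\bigr)\,d\mu,
\]
and the only tool available to control the second integrand is \cite[Lemma~3.4]{GL:04}, which gives $-\int_{P_b}\log d(x,b)\,d\mu\le \frac{1}{t}\bigl(C\mu(P_b)-\mu(P_b)\log\mu(P_b)\bigr)$. With $\mu(P_b)\le 1/n$ this is $O((\log n)/n)$, not $O(1/n)$, so your comparison yields only $|P_a\cap K_\mu|^{s_0}\lesssim(\log n)/n$, hence $\mu(P_a)\lesssim(\log n)/n$. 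The same obstruction appears if you instead compare with $\hat e_n-\hat e_{n+1}$, since \cite[Lemma~5.8]{GL:04} only gives $\hat e_n-\hat e_{n+1}=O(n^{-1/2})$. The paper does not try to win a single-point exchange at scale $n$; instead it redistributes many points at once within carefully chosen regions $D_{\omega,i}$ (Lemmas~\ref{lem7}, \ref{lem8}, \ref{lem4}), so that the gain and loss are compared via differences $\hat e_{L-L_0-\cdots}(\nu_B)-\hat e_{L+L_1}(\nu_B)$ of \emph{bounded} index gaps, which do go to zero uniformly.

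\textbf{The lower measure bound via the contained ball does not work.} You conclude $\mu(P_a)\ge C_1(d_3cn^{-1/s_0})^{s_0}$ by applying the lower half of (\ref{AD}) to the ball $B(a,d_3|P_a\cap K_\mu|)$. But (\ref{AD}) requires the centre to lie in $K_\mu$, and for $r=0$ there is no ``barycentre condition'' forcing $a\in K_\mu$; the first-order condition is $\int_{P_a}(x-a)/|x-a|^2\,d\mu=0$, which does not even put $a$ in the convex hull in general, and for a fractal $K_\mu$ the convex hull would anyway not suffice. Since $d_3$ is small, the ball $B(a,d_3|P_a\cap K_\mu|)$ may miss $K_\mu$ entirely. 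The paper never uses the contained ball for the measure lower bound: it proves $\mu(P_a)\ge d_1/n$ by showing (via Theorem~2.4 of \cite{GL:04}) that the local piece $H(a)=\alpha_n\cap\bigcup_{\tau\in\mathcal A_\sigma}(A_\tau)_{\delta|A_\tau|}$ is $T_a$-optimal for the rescaled conditional measure $\nu_{G(a)}$ with $T_a$ uniformly bounded in $[n_1,n_5]$, and then applies the $k$-dependent weak bound of Lemma~\ref{lem11} over finitely many values of $k$.

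In short, the paper's strategy is a localisation-and-reduction argument rather than a direct exchange: first prove that each region $(A_\sigma)_{\delta|A_\sigma|}$ contains a bounded number $L_\sigma\in[n_1,n_4]$ of optimal points (Sections~4--5), then reduce the problem near each $a$ to a bounded-cardinality optimal-set problem for an auxiliary measure satisfying a uniform H\"older bound (Lemma~\ref{lem3}), and finally invoke the weak estimates of Lemmas~\ref{lem11}--\ref{l2}, whose $k$-dependence becomes harmless once $k$ ranges over a finite set.
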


Our main idea for the proof of Theorem \ref{mthm} is to reduce the quantization problem for $\mu$ with respect to an arbitrarily large $n$ to that for some conditional measures of $\mu$ with respect to some bounded integers, and then we apply Theorem 2.4 of \cite{GL:04} which says that a subset $\beta$ of a $k$-optimal set is ${\rm card}(\beta)$-optimal for the corresponding conditional measure of $\mu$. In order to accomplish the above-mentioned reduction, we will select some auxiliary integers and establish a characterization for $n$-optimal sets for $\mu$ with respect to the geometric mean error. In order to complete the proof of Theorem \ref{mthm} by using \cite[Theorem 2.4]{GL:04}, we will prove some weak estimates for the measures and geometrical size of elements of an optimal Voronoi partition. These results will be given in a more general context and allow us to drop an additional condition in \cite{Zhu:13} that the considered measure vanishes on every hyperplane.

Unlike the $L_r$-quantization problem, in the study of the geometric mean error, the involved integrals are usually negative and the integrands are in logarithmic form. It turns out that those methods to characterize the optimal sets in the $L_r$-quantization problem are often not applicable. For instance, let $A\supset B$ be Borel sets and $\alpha$ a non-empty finite set, we have
$I_{\nu,r}(A,\alpha)\geq I_{\nu,r}(B,\alpha)$ for $r>0$; while for $r=0$, we usually have an inequality in the reverse direction:
$I_\nu(A,\alpha)\leq I_\nu(B,\alpha)$, because the integrands are usually negative. For this reason, the arguments in the present paper are substantially different from those in \cite{Zhu:20} which are for $L_r$-quantization for $\mu$.
\section{Preliminaries}
For a probability measure $\nu$ on $\mathbb{R}^q$, we always denote the support of $\nu$ by $K_\nu$.
Let $m$ be the smallest integer with $m>2(C_1^{-1}C_2)^{1/s_0}$. Let $k_0$ be the smallest integer such that $2m^{-k_0}<\epsilon_0$. Note that, for the Ahlfors-David probability measure $\mu$, $K_\mu$ is compact. Thus, for every $k\geq k_0$, we denote by $\phi_k$ the largest number of closed balls of radii $m^{-k}$ which are pairwise disjoint and centered in $K_\mu$. We fix such $\phi_k$ closed balls and denote them by $E_{k,i},1\leq i\leq\phi_k$. We define
\[
\Omega_k:=\{(k,i):1\leq i\leq \phi_k\}.
\]
By the definition of $m$, one can show that $\phi_k<\phi_{k+1}$ by using (\ref{AD}) and the arguments in the proof of \cite[Lemma 2.1]{Zhu:20}. For $\sigma\in\Omega_k$, we denote the center of $E_\sigma$ by $c_\sigma$ and define $A_\sigma:=B(c_\sigma,2m^{-k})$. Then
we have $K_\mu\subset\bigcup_{\sigma\in\Omega_k}A_\sigma$. The following lemma is a consequence of (\ref{AD}).
\begin{lemma}\label{lem5}
There exist constants $\eta_1,\eta_2>0$, such that for every $\sigma\in\Omega_k$,
\[
\eta_1\phi_k^{-1}\leq\mu(A_\sigma)\leq\eta_2\phi_k^{-1}.
\]
\end{lemma}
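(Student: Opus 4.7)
The plan is to bracket both $\mu(A_\sigma)$ and $\phi_k^{-1}$ between fixed multiples of $m^{-ks_0}$ using the Ahlfors--David condition, and then compare.

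First, I would handle the upper bound. Since $c_\sigma\in K_\mu$ and $2m^{-k}<\epsilon_0$ for $k\geq k_0$, the AD inequality (\ref{AD}) directly gives $\mu(A_\sigma)=\mu(B(c_\sigma,2m^{-k}))\leq C_2 2^{s_0} m^{-ks_0}$. To convert $m^{-ks_0}$ into $\phi_k^{-1}$, I would use that the balls $E_\sigma$, $\sigma\in\Omega_k$, are pairwise disjoint and centered in $K_\mu$, so by the AD lower bound
\[
1=\mu(K_\mu)\geq \sum_{\sigma\in\Omega_k}\mu(E_\sigma)\geq \phi_k\, C_1 m^{-ks_0},
\]
which yields $m^{-ks_0}\leq C_1^{-1}\phi_k^{-1}$. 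Combining gives $\mu(A_\sigma)\leq C_1^{-1}C_2 2^{s_0}\phi_k^{-1}$, so one may take $\eta_2=C_1^{-1}C_2 2^{s_0}$.

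For the lower bound, I would exploit the \emph{maximality} in the definition of $\phi_k$: no further closed ball of radius $m^{-k}$ centered in $K_\mu$ can be added while preserving disjointness, so for every $x\in K_\mu$ there is some $\sigma$ with $d(x,c_\sigma)\leq 2m^{-k}$. Hence $K_\mu\subset\bigcup_\sigma A_\sigma$ (this is already noted in the text), and the AD upper bound applied to each $A_\sigma$ gives
\[
1=\mu(K_\mu)\leq\sum_{\sigma\in\Omega_k}\mu(A_\sigma)\leq \phi_k\, C_2 2^{s_0} m^{-ks_0},
\]
so $m^{-ks_0}\geq C_2^{-1}2^{-s_0}\phi_k^{-1}$. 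Since $E_\sigma\subset A_\sigma$, the AD lower bound then yields
\[
\mu(A_\sigma)\geq\mu(E_\sigma)\geq C_1 m^{-ks_0}\geq C_1 C_2^{-1}2^{-s_0}\phi_k^{-1},
\]
so one may take $\eta_1=C_1 C_2^{-1}2^{-s_0}$.

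There is no real obstacle here; the only point to be careful about is the two-sided use of the AD condition, with the lower bound applied to the disjoint packing $\{E_\sigma\}$ and the upper bound applied to the induced cover $\{A_\sigma\}$, together with the fact that maximality of a packing yields such a cover with doubled radii. The constants $\eta_1,\eta_2$ depend only on $C_1,C_2,s_0$ and are independent of $k$ and $\sigma$, as required.
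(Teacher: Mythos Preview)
Your proof is correct and follows essentially the same approach as the paper: both arguments apply the Ahlfors--David lower bound to the disjoint packing $\{E_\sigma\}$ and the upper bound to the cover $\{A_\sigma\}$ in order to relate $\mu(A_\sigma)$ to $\phi_k^{-1}$. The only cosmetic difference is that you factor through the intermediate quantity $m^{-ks_0}$, whereas the paper compares the measures of different balls directly via the ratio $C_2/C_1$; this yields slightly different (and in fact sharper for $\eta_2$) explicit constants, but the underlying idea is identical.
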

\begin{proof}
Note that $K_\mu\subset\bigcup_{\sigma\in\Omega_k}A_\sigma$. By (\ref{AD}), for every $\sigma\in\Omega_k$, we have
\begin{eqnarray*}
&&1\leq\sum_{\tau\in\Omega_k}\mu(A_\tau)\leq\phi_k C_2C_1^{-1}\mu(A_\sigma);
\\&&\mu(A_\sigma)\leq C_2C_1^{-1}2^{s_0}\mu(E_\sigma).
\end{eqnarray*}
It follows that $\mu(A_\sigma)\geq C_2^{-1}C_1\phi_k^{-1}$. Because the sets $E_\tau,\tau\in\Omega_k$, are pairwise disjoint, by (\ref{AD}), we have
\[
1\geq\sum_{\tau\in\Omega_k}\mu(E_\tau)\geq\phi_k C_1C_2^{-1}\mu(E_\sigma)\geq\phi_k C_1^2C_2^{-2}2^{-s_0}\mu(A_\sigma).
\]
Hence, we have $\mu(A_\sigma)\leq (C_1^{-1}C_2)^22^{s_0}\phi_k^{-1}$. It suffices to define
\[
\eta_1:=C_2^{-1}C_1\;{\rm and}\;\eta_2:=(C_1^{-1}C_2)^22^{s_0}.
\]
\end{proof}

Let $C_1, C_2$ be as given in (\ref{AD}). We define
\begin{eqnarray}
&&\delta:=\frac{1}{16}\big(C_1C_2^{-1}\big)^{\frac{1}{s_0}};\label{g3}\\
&&\mathcal{A}_\sigma:=\{\tau\in\Omega_k: (A_\tau)_{2\delta|A_\tau|}\cap(A_\sigma)_{2\delta|A_\sigma|}\neq\emptyset\};\label{z8}\\
&&M_\sigma:={\rm card}(\mathcal{A}_\sigma);\;A_\sigma^*:=\bigcup_{\tau\in\mathcal{A}_\sigma}A_\tau,\;\;\sigma\in\Omega_k.\label{z7}
\end{eqnarray}
\begin{remark}
The number $\delta$ is defined as above for two reasons. First, $\delta$ should be small enough so that the set $E_\omega\setminus B(x_0,2^{-1}\delta|A_\omega|)$ is large enough. Second, it will be convenient for us to estimate the $\mu$-measure of a ball $B(x_0,2^{-1}\delta|A_\omega|)$ by using (\ref{AD}). One may see Lemma \ref{lem2} below for more details.
\end{remark}

For $x\in\mathbb{R}$, let $[x]$ denote the largest integer not exceeding $x$. For $t>0$ and a set $A\subset\mathbb{R}^q$, we denote the closed $t$-neighborhood of $A$ by $(A)_t$.
 \begin{remark}\label{rem0}
 Let $L_0:=[2\delta^{-1}+10]$. By estimating the volumes,  we know that for each $\sigma\in\Omega_k$, the set $(A_\sigma)_{2\delta|A_\sigma|}$ can be covered by $L_0$ closed balls of radii $2^{-1}\delta|A_\sigma|$ which are centered in $(A_\sigma)_{2\delta|A_\sigma|}$. This can be seen as follows. First, we consider the largest number of pairwise disjoint closed balls of radii $4^{-1}\delta|A_\sigma|$ which are centered in $(A_\sigma)_{2\delta|A_\sigma|}$; and then we double the radii of the balls and get a cover for $(A_\sigma)_{2\delta|A_\sigma|}$. In the remaining part of the paper, we always denote by $B_\sigma$ the set of the centers of such $L_0$ closed balls.
 \end{remark}

 Using the next lemma, we collect some basic facts regarding $A_\sigma^*$. These facts will allow us to adjust the number of prospective optimal points in $(A_\sigma)_{\delta|A_\sigma|}$ without affecting the points in $K_\mu\setminus A_\sigma^*$ unfavorably. One may apply Lemma 8 of \cite{Zhu:08} to obtain an optional proof.
\begin{lemma}\label{lem9}
Let $\sigma\in\Omega_k$ and let $\emptyset\neq\beta\subset\mathbb{R}^q$ be a finite set. Then
\begin{enumerate}
\item[(a1)] there exists an integer $M_0$ such that $M_\sigma\leq M_0$.
\item[(a2)] for every $\tau\in\Omega_k\setminus\mathcal{A}_\sigma$ and every $x\in A_\tau$, we have
\[
d(x,(\beta\setminus (A_\sigma)_{\delta|A_\sigma|})\cup B_\sigma)\leq d(x,\beta).
\]
\end{enumerate}
\end{lemma}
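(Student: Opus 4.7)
Statement (a1) is a volume-packing argument. The intersection condition defining $\mathcal{A}_\sigma$ forces every center $c_\tau$ with $\tau\in\mathcal{A}_\sigma$ to satisfy
\[
d(c_\tau,c_\sigma)\leq 2\bigl(|A_\sigma|/2+2\delta|A_\sigma|\bigr)=4m^{-k}(1+4\delta),
\]
while the balls $E_\tau=B(c_\tau,m^{-k})$ are pairwise disjoint and all contained in $B(c_\sigma,m^{-k}(5+16\delta))$. Comparing Lebesgue volumes in $\mathbb{R}^q$ then gives a universal bound $M_\sigma\leq M_0:=(5+16\delta)^q$ depending only on $q$ and $\delta$.

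For (a2), I would pick any nearest point $b\in\beta$ to $x$. If $b\notin (A_\sigma)_{\delta|A_\sigma|}$, then $b$ already lies in $(\beta\setminus (A_\sigma)_{\delta|A_\sigma|})\cup B_\sigma$ and the inequality is immediate. The genuine work is the complementary case, where $b\in (A_\sigma)_{\delta|A_\sigma|}$: here I must produce some $y\in B_\sigma$ with $d(x,y)\leq d(x,b)$.

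To construct such a $y$, observe that $\tau\notin\mathcal{A}_\sigma$ together with $x\in A_\tau$ forces $x\notin (A_\sigma)_{2\delta|A_\sigma|}$, so $d(x,c_\sigma)>r_\sigma:=|A_\sigma|/2+2\delta|A_\sigma|$, the radius of the enlarged ball $(A_\sigma)_{2\delta|A_\sigma|}$. I would take $p$ to be the unique point on the segment from $x$ to $c_\sigma$ with $d(p,c_\sigma)=r_\sigma$, and then invoke Remark \ref{rem0} to pick $y\in B_\sigma$ with $d(p,y)\leq 2^{-1}\delta|A_\sigma|$. Chaining
\[
d(x,y)\leq d(x,p)+2^{-1}\delta|A_\sigma|=d(x,c_\sigma)-r_\sigma+2^{-1}\delta|A_\sigma|
\]
with the crude lower bound $d(x,b)\geq d(x,c_\sigma)-|A_\sigma|/2-\delta|A_\sigma|$ (valid since $b\in (A_\sigma)_{\delta|A_\sigma|}$) yields $d(x,y)\leq d(x,b)-\delta|A_\sigma|/2$, which even gives strict inequality. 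The only delicate point is placing $p$ on the \emph{near} side of $\partial (A_\sigma)_{2\delta|A_\sigma|}$ facing $x$: this is precisely where the $\delta|A_\sigma|$ of slack between $(A_\sigma)_{\delta|A_\sigma|}$ and $\partial (A_\sigma)_{2\delta|A_\sigma|}$ dominates the $2^{-1}\delta|A_\sigma|$ approximation error incurred by replacing $p$ by its nearest member of $B_\sigma$.
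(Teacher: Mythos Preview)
Your proposal is correct and follows essentially the same route as the paper. For (a1) the paper does the identical volume-packing argument (with the slightly coarser containment $A_\sigma^*\subset B(c_\sigma,2(1+2\delta)|A_\sigma|)$, leading to $M_0=[(8(1+2\delta))^q]+1$). For (a2) the paper likewise splits into the two cases on whether a nearest point of $\beta$ lies in $(A_\sigma)_{\delta|A_\sigma|}$; in the nontrivial case it chooses the boundary point $z_0\in\partial(A_\sigma)_{2\delta|A_\sigma|}$ closest to $x$---which, since the set is a ball, coincides with your $p$ on the segment $[x,c_\sigma]$---and finishes with the same triangle-inequality chain through $c_\sigma$ and the $\tfrac12\delta|A_\sigma|$ covering property of $B_\sigma$.
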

\begin{proof}
(a1) Note that $A_\sigma^*\subset B(c_\sigma,2(1+2\delta)|A_\sigma|)$ and $E_\tau,\tau\in\mathcal{A}_\sigma$, are pairwise disjoint. By estimating the volumes, one can see that
\[
M_\sigma (4^{-1}|A_\sigma|)^q\leq(2(1+2\delta)|A_\sigma|)^q.
\]
Hence, it is sufficient to define $M_0:=[(8(1+2\delta))^q]+1$.

(a2) Let $\tau\in\Omega_k\setminus\mathcal{A}_\sigma$ and $x\in A_\tau$. Then we have $(A_\sigma)_{2\delta|A_\sigma|}\cap(A_\tau)_{2\delta|A_\tau|}=\emptyset$. Therefore, $x\in\mathbb{R}^q\setminus(A_\sigma)_{2\delta|A_\sigma|}$. We have two cases:

 Case 1: $d(x,\beta)=d(x,\beta\setminus(A_\sigma)_{\delta|A_\sigma|})$, then (a2) is clearly true.

 Case 2: $d(x,\beta)=d(x,\beta\cap(A_\sigma)_{\delta|A_\sigma|})$. We denote the boundary of a set $B$ by $\partial B$. Note that $(A_\sigma)_{2\delta|A_\sigma|}$ is compact with non-empty interior. We may select a $z_0\in \partial(A_\sigma)_{2\delta|A_\sigma|}$ such that
 \[
 d(x,z_0)=d(x,(A_\sigma)_{2\delta|A_\sigma|})=d(x,\partial(A_\sigma)_{2\delta|A_\sigma|})
 \]
 By the definition of $B_\sigma$, there exists some $b\in B_\sigma$ such that $d(z_0,b)\leq 2^{-1}\delta|A_\sigma|$. For every $a\in \beta\cap(A_\sigma)_{\delta|A_\sigma|}$, we have
\begin{equation*}
d(x,a)\geq d(x,z_0)+\delta|A_\sigma|>d(x,z_0)+d(z_0,b)\geq d(x,b).
\end{equation*}
Hence, $d(x,\beta\cap(A_\sigma)_{\delta|A_\sigma|})\geq  d(x,b)$.
\begin{eqnarray*}
d(x,\beta)=d(x,\beta\cap(A_\sigma)_{\delta|A_\sigma|})\geq d(x,B_\sigma)\geq d(x,(\beta\setminus (A_\sigma)_{\delta|A_\sigma|})\cup B_\sigma)).
\end{eqnarray*}
This completes the proof of the lemma.
\end{proof}

\section{Weak estimates for measures and geometrical size of $P_a(\alpha_k)$}

Let $C,t>0$. We consider compactly supported measures $\nu$ satisfying
\begin{equation}\label{holder}
\sup_{x\in\mathbb{R}^q}\nu(B(x,\epsilon))\leq C\epsilon^t \;\;{\rm for\; every}\; \epsilon>0.
\end{equation}
Without loss of generality, we may assume that $C\geq1$. As in \cite{GL:04}, we write
\[
\hat{e}_k(\nu)=\log e_{k,0}(\nu)=\inf_{\alpha\in\mathcal{D}_k}I_\nu(\mathbb{R}^q,\alpha).
\]

The following lemma can be seen as an analogue of \cite[Lemma 2.1]{Zhu:20}.
\begin{lemma}\label{lem1}
Let $\nu$ be a Borel probability measure on $\mathbb{R}^q$ with compact support $K_\nu$. Assume that $|K_\nu|\leq 1$ and (\ref{holder}) is satisfied.
Then for every $k\geq 2$, there exists a real number $\zeta_k>0$, which depends on $C$ and $t$ such that
\[
\hat{e}_{k-1}(\nu)-\hat{e}_k(\nu)\geq\zeta_k.
\]
\end{lemma}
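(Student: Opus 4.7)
The plan is to take any $\alpha_{k-1}\in C_{k-1}(\nu)$ (which is non-empty by (\ref{holder}) and the introductory discussion), adjoin a judiciously chosen point $x_0\in K_\nu$, and use $\beta:=\alpha_{k-1}\cup\{x_0\}\in\mathcal{D}_k$ as a test set. Since $\hat{e}_{k-1}(\nu)=I_\nu(\mathbb{R}^q,\alpha_{k-1})$ and $\hat{e}_k(\nu)\leq I_\nu(\mathbb{R}^q,\beta)$, it will suffice to bound from below the (non-negative) gain
\[
G:=\int_{\mathbb{R}^q}\bigl[\log d(x,\alpha_{k-1})-\log d(x,\beta)\bigr]d\nu(x).
\]

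To isolate a region far from $\alpha_{k-1}$ carrying definite mass, I would first apply pigeonhole to the Voronoi decomposition, obtaining some $a_0\in\alpha_{k-1}$ with $\nu(W(a_0|\alpha_{k-1}))\geq 1/(k-1)$. Setting $r_k:=(2C(k-1))^{-1/t}$ (so $r_k\leq 1$ since $C\geq 1$ and $k\geq 2$), (\ref{holder}) yields $\nu(B(a_0,r_k))\leq 1/(2(k-1))$, whence
\[
F:=K_\nu\cap W(a_0|\alpha_{k-1})\setminus B(a_0,r_k)
\]
satisfies $\nu(F)\geq 1/(2(k-1))$, and every $x\in F$ enjoys $d(x,\alpha_{k-1})=d(x,a_0)\geq r_k$.

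Next I would extract a small ball inside $F$ with non-negligible $\nu$-mass. Pick a maximal $(r_k/8)$-separated subset $\{z_1,\dots,z_L\}$ of $F$; then $F\subset\bigcup_j B(z_j,r_k/8)$, and the disjoint balls $B(z_j,r_k/16)$ with centers in $K_\nu$ all fit inside a common ball of radius $1+r_k/16$, so volume comparison gives $L\leq L_k:=(1+16/r_k)^q$. Pigeonhole produces $j_0$ with $\nu(F\cap B(z_{j_0},r_k/8))\geq 1/(2(k-1)L_k)$, and I set $x_0:=z_{j_0}$. For each $x\in F\cap B(x_0,r_k/8)$ one has $d(x,\alpha_{k-1})\geq r_k$ and $d(x,x_0)\leq r_k/8<r_k$, so $d(x,\beta)=d(x,x_0)$, and the integrand of $G$ equals $\log[d(x,\alpha_{k-1})/d(x,x_0)]\geq\log 8$ pointwise. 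Using non-negativity of the integrand elsewhere, this will yield
\[
\hat{e}_{k-1}(\nu)-\hat{e}_k(\nu)\geq G\geq\frac{\log 8}{2(k-1)L_k}=:\zeta_k,
\]
a positive constant depending only on $C$, $t$, $q$, $k$.

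The main obstacle is that (\ref{holder}) supplies only an upper bound on ball-masses, so one cannot lower-bound $\nu(B(x_0,\rho))$ directly and it is not a priori clear that any single candidate $x_0$ sits in a region of concentrated mass. The covering-plus-pigeonhole step above is the device that bypasses the missing lower density estimate: the hypothesis $|K_\nu|\leq 1$ turns into a quantitative ceiling on the covering number of $F$ at scale $r_k/8$, which, combined with $\nu(F)\geq 1/(2(k-1))$, forces the existence of a ball of the required size whose $\nu$-mass can be controlled from below.
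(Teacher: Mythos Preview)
Your argument is correct and follows essentially the same strategy as the paper's proof: isolate a set of definite $\nu$-mass lying at distance at least a fixed scale from $\alpha_{k-1}$, cover it by boundedly many small balls (using $|K_\nu|\leq 1$), apply pigeonhole to find one ball with controlled mass, and adjoin its center to $\alpha_{k-1}$. The only cosmetic difference is that the paper removes the union $\bigcup_{i}B(a_i,\delta_{k,2})$ around \emph{all} points of $\alpha_{k-1}$ (leaving mass $\geq 1/2$), whereas you first pigeonhole into a single Voronoi cell and remove just one ball (leaving mass $\geq 1/(2(k-1))$); the resulting constants differ but the mechanism is identical.
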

\begin{proof}
Let $\alpha_{k-1}=\{a_i\}_{i=1}^{k-1}\in C_{k-1}(\nu)$. We define
\begin{eqnarray*}
&&\delta_{k,1}:=(4C(k-1))^{-\frac{1}{t}};\;\delta_{k,2}:=(2C(k-1))^{-\frac{1}{t}};\\
&&\delta_k:=2^{-1}\min\{\delta_{k,1},\delta_{k,2}-\delta_{k,1}\}.
\end{eqnarray*}
By (\ref{holder}), we have $\nu(K_\nu\setminus\bigcup_{i=1}^{k-1}B(a_i,\delta_{k,2}))\geq 2^{-1}$.
Let $l_k:=[(2\delta_k^{-1}+2)^q]+1$. Note that $|K_\nu|\leq 1$. Hence, $K_\nu\setminus\bigcup_{i=1}^{k-1}B(a_i,\delta_{k,2})$ can be covered by $l_k$ closed balls $B_i (1\leq i\leq l_k$) of radii $\delta_k$ which are centered in the set $K_\nu\setminus\bigcup_{i=1}^{k-1}B(a_i,\delta_{k,2})$ (cf. Remark \ref{rem0}). Thus, there exists some ball $B_i$ such that $\nu(B_i)\geq (2l_k)^{-1}$. Let $b_i$ denote the center of $B_i$. Then
\begin{eqnarray*}
\hat{e}_{k-1}(\nu)-\hat{e}_k(\nu)&\geq& I_\nu(\mathbb{R}^q,\alpha_{k-1})-I_\nu(\mathbb{R}^q,\alpha_{k-1}\cup\{b_i\})\\
&\geq&I_\nu(B_i,\alpha_{k-1})-I_\nu(B_i,\alpha_{k-1}\cup\{b_i\})\\&\geq&I_\nu(B_i,\alpha_{k-1})-I_\nu(B_i,\{b_i\})\\&\geq&\nu(B_i)(\log\delta_{k,1}-\log\delta_k)
\\&\geq&(2l_k)^{-1}\log 2.
\end{eqnarray*}
The proof of the lemma is complete by defining $\zeta_k:=(2l_k)^{-1}\log 2$.
\end{proof}

Using the next lemma, we establish a lower bound for the $\nu$-measure of the elements of a VP with respect to a $k$-optimal set for $\nu$ of order zero.
\begin{lemma}\label{lem11}
 Assume that the hypothesis of Lemma \ref{lem1} is satisfied. For each $k\geq 2$, there exists a positive real number $\underline{d}_k$ such that, for every $\alpha_k\in C_k(\nu)$ and an arbitrary VP $\{P_a(\alpha_k)\}_{a\in\alpha_k}$ with respect to $\alpha_k$, we have
\[
\min_{a\in\alpha_k}\nu(P_a(\alpha_k))\geq \underline{d}_k.
\]
\end{lemma}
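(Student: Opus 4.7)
The plan is to argue by contradiction using Lemma \ref{lem1} as the driving inequality. Suppose that for some $a_0 \in \alpha_k$ we have $\nu(P_{a_0}(\alpha_k)) < \epsilon$, with $\epsilon > 0$ small and to be chosen later. Set $\beta := \alpha_k \setminus \{a_0\}$; since (\ref{holder}) rules out atoms, $K_\nu$ is infinite and $\operatorname{card}(\alpha_k) = k$, so $\operatorname{card}(\beta) = k-1$. Lemma \ref{lem1} then gives
\[
\zeta_k \leq \hat{e}_{k-1}(\nu) - \hat{e}_k(\nu) \leq I_\nu(\mathbb{R}^q, \beta) - I_\nu(\mathbb{R}^q, \alpha_k).
\]
Because $d(x, \beta) = d(x, \alpha_k)$ off $W(a_0 \mid \alpha_k)$, the right-hand side collapses to $\int_{P_{a_0}(\alpha_k)} (\log d(x,\beta) - \log d(x, a_0)) \, d\nu(x)$. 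My aim is to bound this integral by an explicit quantity that tends to $0$ with $\nu(P_{a_0}(\alpha_k))$, producing the desired contradiction.

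A preliminary fact I would record first is that $\alpha_k \subset \operatorname{conv}(K_\nu)$: if some $a \in \alpha_k$ were outside this compact convex set, its metric projection $a'$ would satisfy $d(x, a') < d(x, a)$ for every $x \in K_\nu$, and replacing $a$ by $a'$ would strictly decrease $I_\nu(\mathbb{R}^q, \alpha_k)$, contradicting optimality. Combined with $|K_\nu| \leq 1$ (and hence $|\operatorname{conv}(K_\nu)| \leq 1$), this yields $d(x, a) \leq 1$, so $\log d(x, a) \leq 0$, for every $x \in K_\nu$ and $a \in \alpha_k$. In particular $\log d(x, \beta) \leq 0$ $\nu$-a.e.\ on $P_{a_0}(\alpha_k)$, giving the clean pointwise estimate
\[
0 \leq \log d(x, \beta) - \log d(x, a_0) \leq -\log d(x, a_0).
\]

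Next I would fix a parameter $r \in (0,1)$ and split the integral into the parts outside and inside $B(a_0, r)$. Outside, the trivial estimate $-\log d(x, a_0) \leq \log(1/r)$ contributes at most $\nu(P_{a_0}(\alpha_k)) \log(1/r)$. Inside, I would combine the layer-cake identity
\[
-\int_{B(a_0, r)} \log d(x, a_0) \, d\nu(x) = \int_0^\infty \nu\bigl(B(a_0, \min\{r, e^{-u}\})\bigr) \, du
\]
with (\ref{holder}) to obtain a bound of the form $C r^t (\log(1/r) + 1/t)$, which vanishes as $r \to 0^+$. Combining,
\[
\zeta_k \leq \nu(P_{a_0}(\alpha_k)) \log(1/r) + C r^t (\log(1/r) + 1/t),
\]
and choosing $r = r_k \in (0,1)$ small enough that the second summand is at most $\zeta_k/2$ produces the explicit lower bound $\underline{d}_k := \zeta_k/(2\log(1/r_k)) > 0$.

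The main technical obstacle is controlling the logarithmic singularity of $-\log d(x, a_0)$ near the removed center: a crude sup-norm estimate is useless because the integrand is unbounded, and, as the introduction notes, the logarithmic-negative nature of the integrand obstructs the methods used for $L_r$-quantization. The H\"older hypothesis (\ref{holder}), read through the layer-cake representation, is exactly what converts this singularity into an integrable quantity with the quantitative decay $r^t \log(1/r)$ needed to absorb it into $\zeta_k/2$, after which the contradiction closes immediately.
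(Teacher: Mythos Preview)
Your proof is correct and follows the same strategy as the paper: remove the point $a_0$ to form $\beta=\alpha_k\setminus\{a_0\}$, invoke Lemma~\ref{lem1} to get $\zeta_k\le I_\nu(P_{a_0}(\alpha_k),\beta)-I_\nu(P_{a_0}(\alpha_k),\{a_0\})$, and then bound the right-hand side using the H\"older hypothesis~(\ref{holder}). The only differences are in packaging: the paper bounds $d(x,\beta)\le 3$ via \cite[Lemma~5.8]{GL:04} and controls the singular term by quoting \cite[Lemma~3.4]{GL:04} (giving $I_\nu(P_a(\alpha_k),\{a\})\ge t^{-1}(\nu(P_a)\log\nu(P_a)-C\nu(P_a))$ and then using $-x\log x\to 0$), whereas you obtain the analogous estimates directly from the convex-hull inclusion $\alpha_k\subset\mathrm{conv}(K_\nu)$ and a layer-cake computation with a free scale parameter $r$.
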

\begin{proof}
Let $\alpha_k\in C_k(\nu)$ and $\{P_a(\alpha_k)\}_{a\in\alpha_k}$ a VP. Let $a\in\alpha_k$ be fixed. By Theorem 2.4 of \cite{GL:04}, $\nu(P_a(\alpha_k))>0$. We define $\beta:=\alpha_k\setminus\{a\}$. Then for every $x\in\bigcup_{b\in\beta}P_b(\alpha_k)$, we have $d(x,\beta)=d(x,\alpha_k)$. Thus,
\begin{eqnarray}\label{s10}
\hat{e}_{k-1}(\nu)-\hat{e}_k(\nu)&\leq& I_\nu(\mathbb{R}^q,\beta)-I_\nu(\mathbb{R}^q,\alpha_k)\nonumber\\&=&I_\nu(P_a(\alpha_k),\beta)-I_\nu(P_a(\alpha_k),\{a\}).
\end{eqnarray}
Note that $\sup_{x\in K_\nu}d(x,\alpha_k)\leq 2|K_\nu|\leq 2$ (cf. \cite[Lemma 5.8]{GL:04}). Therefore for every $x\in P_a(\alpha_k)\cap K_\nu$, we have,
$d(x,\beta)\leq 3|K_\nu|\leq 3$. It follows that
\begin{equation}\label{s8}
I_\nu(P_a(\alpha_k),\beta)\leq\nu(P_a(\alpha_k))\log 3.
\end{equation}
Now by \cite[Lemma 3.4]{GL:04}, we have
\begin{equation}\label{s9}
I_\nu(P_a(\alpha_k),\{a\})\geq\frac{1}{t}\big(\nu(P_a(\alpha_k))\log\nu(P_a(\alpha_k))-C\nu(P_a(\alpha_k))\big).
\end{equation}
We define $h(x):=-x\log x$ for $x>0$. Then $h(x)\to 0$ as $x$ decreases to zero. Thus,
there exists some $\eta_k>0$ such that $0<x<\eta_k$ implies $-x\log x<2^{-1}t\zeta_k$. Therefore, if $\mu(P_a(\alpha_k))<\eta_k$, using Lemma \ref{lem1} and (\ref{s10})-(\ref{s9}), we deduce
\begin{eqnarray*}
\zeta_k\leq\hat{e}_{k-1}(\nu)-\hat{e}_k(\nu)\leq\nu(P_a(\alpha_k))\log 3+\frac{1}{2}\zeta_k+\frac{C}{t}\nu(P_a(\alpha_k)).
\end{eqnarray*}
Thus, we obtain $\nu(P_a(\alpha_k))\geq 2^{-1}(\log3+Ct^{-1})^{-1}\zeta_k$. It suffices to define
\[
\underline{d}_k:=\min\big\{\eta_k,2^{-1}(\log3+Ct^{-1})^{-1}\zeta_k\big\}.
\]
\end{proof}

Next, we establish an upper bound for $\nu(P_a(\alpha_k)),a\in\alpha_k$.
\begin{lemma}\label{l1}
 Assume that the hypothesis of Lemma \ref{lem1} is satisfied. For each $k\geq 1$, there exists a positive real number $\overline{d}_k$ such that, for every $\alpha_k\in C_k(\nu)$ and an arbitrary VP $\{P_a(\alpha_k)\}_{a\in\alpha_k}$ with respect to $\alpha_k$, we have
\[
\max_{a\in\alpha_k}\nu(P_a(\alpha_k))\leq \overline{d}_k.
\]
\end{lemma}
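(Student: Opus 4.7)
The plan is to derive the upper bound as an immediate consequence of the already-proved lower bound from Lemma \ref{lem11} together with the fact that $\{P_a(\alpha_k)\}_{a\in\alpha_k}$ is a Borel partition of $\mathbb{R}^q$, so its $\nu$-measures sum to one.

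First I would dispose of the trivial case $k=1$: there is only one element $a\in\alpha_1$, the region $P_a(\alpha_1)$ is (up to $\nu$-null adjustments) all of $\mathbb{R}^q$, so one may take $\overline{d}_1:=1$. For $k\geq 2$, let $\alpha_k\in C_k(\nu)$ and let $\{P_b(\alpha_k)\}_{b\in\alpha_k}$ be an arbitrary VP with respect to $\alpha_k$. Lemma \ref{lem11} provides a constant $\underline{d}_k>0$ (depending only on $k$, $C$ and $t$) with $\nu(P_b(\alpha_k))\geq \underline{d}_k$ for every $b\in\alpha_k$. Then for any fixed $a\in\alpha_k$,
\[
\nu(P_a(\alpha_k))\;=\;1-\sum_{b\in\alpha_k,\,b\neq a}\nu(P_b(\alpha_k))\;\leq\;1-(k-1)\underline{d}_k.
\]
Summing the same lower bound over all $b$ gives $k\underline{d}_k\leq 1$, hence $(k-1)\underline{d}_k\leq (k-1)/k<1$, so the quantity $1-(k-1)\underline{d}_k$ is strictly positive (in fact at least $1/k$). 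It therefore suffices to set
\[
\overline{d}_k\;:=\;1-(k-1)\underline{d}_k\qquad(k\geq 2),\qquad \overline{d}_1:=1.
\]

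There is essentially no obstacle in this argument; the only point that required work was the non-trivial lower bound of Lemma \ref{lem11}. The upper bound is a pigeonhole-style consequence of it, and the fact that the bound is uniform over all $\alpha_k\in C_k(\nu)$ and over all VPs is inherited directly from the corresponding uniformity in Lemma \ref{lem11}.
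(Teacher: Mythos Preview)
Your argument is correct. Since $\{P_b(\alpha_k)\}_{b\in\alpha_k}$ is a Borel partition of $\mathbb{R}^q$, the measures $\nu(P_b(\alpha_k))$ sum to $1$, and the lower bound $\underline{d}_k$ from Lemma~\ref{lem11} immediately yields $\nu(P_a(\alpha_k))\leq 1-(k-1)\underline{d}_k$. The positivity check $1-(k-1)\underline{d}_k\geq 1/k$ is also fine. Because $\underline{d}_k$ depends only on $C,t,k,q$ (and not on the particular $\nu$, $\alpha_k$, or VP), your $\overline{d}_k$ inherits the same uniformity, which is exactly what the later applications in the paper require.

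The paper, however, takes a genuinely different route: it argues directly, without invoking Lemma~\ref{lem11}. Given $a\in\alpha_k$, it sets a radius $\delta_{k,3}\asymp \nu(P_a(\alpha_k))^{1/t}$, shows that at least half of the mass of $P_a(\alpha_k)$ lies outside $B(a,\delta_{k,3})$, covers that annular piece by $N_k$ balls of radius $\tfrac14\delta_{k,3}$, picks a ball $B_i$ carrying mass $\gtrsim \nu(P_a(\alpha_k))^{1+q/t}$, and then compares $\alpha_k$ with $\alpha_k\cup\{b_i\}$. The resulting gain is bounded above by $\hat e_k(\nu)-\hat e_{k+1}(\nu)\leq \chi_k$ (from \cite[Lemma~5.8]{GL:04}), which forces $\nu(P_a(\alpha_k))\leq (\chi_k/(D_1\log 2))^{t/(t+q)}$. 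This yields an explicit bound that decays to $0$ as $k\to\infty$, whereas your bound $1-(k-1)\underline{d}_k$ is only guaranteed to be at most $1$ (and at least $1/k$). Your argument is markedly shorter and perfectly adequate for the uses in this paper; the paper's argument is self-contained (independent of Lemma~\ref{lem11}) and quantitatively sharper, though that extra sharpness is not exploited downstream.
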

\begin{proof}
Let $\alpha_k\in C_k(\nu)$ and $a\in\alpha_k$. Set
\[
\delta_{k,3}:=(2C)^{-\frac{1}{t}}\nu(P_a(\alpha_k))^{\frac{1}{t}}.
\]
Then we have $\delta_{k,3}<1$, since $C\geq 1$. By (\ref{holder}), we deduce that
\[
\nu(B(a,\delta_{k,3}))\leq C\delta_{k,3}^t=2^{-1}\nu(P_a(\alpha_k)).
\]
It follows that
\begin{equation}\label{z1}
\nu(P_a(\alpha_k)\setminus B(a,\delta_{k,3}))\geq\nu(P_a(\alpha_k))-\nu(B(a,\delta_{k,3}))\geq\frac{1}{2}\nu(P_a(\alpha_k)).
\end{equation}
Let $N_k:=[(8\delta_{k,3}^{-1})^q]+3$. One can easily see that
\begin{equation}\label{zsz1}
N_k\leq (16\delta_{k,3}^{-1})^{q}\;\; {\rm and}\;\;N_k^{-1}\geq16^{-q}\delta_{k,3}^q.
\end{equation}
Note that $|P_a(\alpha_k)\cap K_\nu|\leq |K_\nu|\leq 1$. By estimating volumes, one can see that
\[
(P_a(\alpha_k)\cap K_\nu)\setminus B(a,\delta_{k,3})
 \]
 can be covered by $N_k$ closed balls $B_i (1\leq i\leq N_k)$ of radii $4^{-1}\delta_{k,3}$ which are centered in $(P_a(\alpha_k)\cap K_\nu)\setminus B(a,\delta_{k,3})$. Thus, by (\ref{z1}) and (\ref{zsz1}), there exists some ball $B_i$ such that
\begin{eqnarray}\label{z3}
\nu(B_i\cap P_a(\alpha_k))&\geq& \frac{1}{2N_k}\nu(P_a(\alpha_k)\geq32^{-q}\delta_{k,3}^q\nu(P_a(\alpha_k))\nonumber
\\&\geq&32^{-q}(2C)^{-\frac{q}{t}}\nu(P_a(\alpha_k))^{1+\frac{q}{t}}\nonumber\\
&=:&D_1\nu(P_a(\alpha_k))^{1+\frac{q}{t}}.
\end{eqnarray}
Now we define $\beta:=\alpha_k\cup\{b_i\}$. Then we have the following estimate:
\begin{eqnarray}\label{z2}
\hat{e}_k(\nu)-\hat{e}_{k+1}(\nu)&\geq& I_\nu(\mathbb{R}^q,\alpha_k)-I_\nu(\mathbb{R}^q,\beta)\nonumber\\
&\geq& I_\nu(B_i\cap P_a(\alpha_k),\alpha_k)-I_\nu(B_i\cap P_a(\alpha_k),\beta)\nonumber\\&\geq& I_\nu(B_i\cap P_a(\alpha_k),\{a\})-I_\nu(B_i\cap P_a(\alpha_k),\{b_i\})
\end{eqnarray}
By the definition of $B_i$, for every $x\in B_i$, we have
\begin{equation}\label{g4}
d(x,a)\geq \frac{1}{2}\delta_{k,3},\;\;d(x,b_i)\leq \frac{1}{4}\delta_{k,3}.
\end{equation}
Now by the proof of \cite[Lemma 5.8]{GL:04}, for every $n\geq 1$, we have
\[
\hat{e}_n(\nu)-\hat{e}_{n+1}(\nu)\leq\frac{1}{n+1}\log3+C^{\frac{1}{2}}\frac{2}{t}\big(\frac{1}{n+1}\big)^{1/2}=:\chi_n.
\]
Using  this and (\ref{z3})-(\ref{g4}), we deduce
\[
\chi_k\geq\hat{e}_k(\nu)-\hat{e}_{k+1}(\nu)\geq\nu(B_i\cap P_a(\alpha_k))\log2\geq D_1\log 2\;(\nu(P_a(\alpha_k))^{1+\frac{q}{t}}.
\]
The proof of the lemma is complete by defining $\overline{d}_k:=\big(\chi_k (D_1\log 2)^{-1}\big)^{\frac{t}{t+q}}$.

\end{proof}

We end this section with an estimate for the geometrical size of the elements of a VP with respect to a $k$-optimal set $\alpha_k\in C_k(\nu)$.
\begin{lemma}\label{l2}
 Assume that the hypothesis of Lemma \ref{lem1} is satisfied. For each $k\geq 2$, there exists a number $g_k>0$ such that, for every $\alpha_k\in C_k(\nu)$ and every VP $\{P_a(\alpha_k)\}_{a\in\alpha_k}$ with respect to $\alpha_k$ and every $a\in\alpha_k$, we have, $P_a(\alpha_k)$ contains a closed ball of radius $g_k|P_a(\alpha_k)\cap K_\nu|$ which is centered at $a$.
\end{lemma}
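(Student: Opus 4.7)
The plan is to argue by contradiction using the gap estimate from Lemma \ref{lem1} together with the upper bound from Lemma \ref{l1} and the H\"older-type condition (\ref{holder}). Suppose toward contradiction that for some $a \in \alpha_k$ we have $B(a, g_k \rho) \not\subset P_a(\alpha_k)$, where $\rho := |P_a(\alpha_k) \cap K_\nu|$ and $g_k > 0$ is to be chosen at the end. Any $x \in B(a, g_k\rho) \setminus P_a(\alpha_k)$ lies in some $P_b(\alpha_k)$ with $b \neq a$, and then $d(x, b) \leq d(x, a) \leq g_k \rho$, so the triangle inequality gives $d(a, b) \leq 2 g_k \rho$. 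Set $\beta := \alpha_k \setminus \{b\}$, a set of cardinality $k - 1$. Since $a \in \beta$, on $P_b(\alpha_k)$ we have $d(x, \beta) \leq d(x, a) \leq d(x, b) + 2 g_k \rho$, and
\begin{equation*}
\zeta_k \leq \hat{e}_{k-1}(\nu) - \hat{e}_k(\nu) \leq I_\nu(\mathbb{R}^q, \beta) - I_\nu(\mathbb{R}^q, \alpha_k) = \int_{P_b(\alpha_k)} \bigl( \log d(x, \beta) - \log d(x, b) \bigr) d\nu(x),
\end{equation*}
because the integrand vanishes off $P_b(\alpha_k)$.

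Next, fix $\eta \in (0, 1/2)$ to be chosen and split the integral into pieces over $B(b, \eta)$ and $P_b(\alpha_k) \setminus B(b, \eta)$. On the outer piece, applying $\log(1 + z) \leq z$ with $z = 2 g_k \rho / d(x, b)$ together with Lemma \ref{l1} and the hypothesis $\rho \leq |K_\nu| \leq 1$ yields
\begin{equation*}
\int_{P_b(\alpha_k) \setminus B(b, \eta)} \bigl( \log d(x, \beta) - \log d(x, b) \bigr) d\nu \leq \frac{2 g_k \rho}{\eta} \nu(P_b(\alpha_k)) \leq \frac{2 \overline{d}_k g_k}{\eta}.
\end{equation*}
On the inner piece, provided $g_k$ is chosen so that $\eta + 2 g_k \leq 1$, the bound $\log d(x, \beta) \leq \log(\eta + 2 g_k \rho) \leq 0$ makes the $\log d(x, \beta)$ term contribute non-positively, so only $-\log d(x,b)$ can contribute positively. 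A layer-cake computation based on (\ref{holder}), splitting the range of $-\log d(x,b)$ at $s = -\log\eta$, yields
\begin{equation*}
\int_{B(b, \eta)} \bigl( - \log d(x, b) \bigr) d\nu(x) \leq - C \eta^t \log\eta + \frac{C \eta^t}{t} = C \eta^t \Bigl( -\log\eta + \tfrac{1}{t} \Bigr).
\end{equation*}

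Combining both pieces gives $\zeta_k \leq 2 \overline{d}_k g_k / \eta + C \eta^t (-\log\eta + 1/t)$. To close the argument, first pick $\eta \in (0, 1/2)$ small enough that $C \eta^t(-\log\eta + 1/t) < \zeta_k / 3$, then pick $g_k > 0$ small enough that $2 \overline{d}_k g_k / \eta < \zeta_k / 3$ and $\eta + 2 g_k \leq 1$. These choices force $\zeta_k < 2 \zeta_k / 3$, the desired contradiction. The main technical obstacle is the layer-cake estimate controlling the logarithmic singularity on $B(b,\eta)$; it is the substitute, in the geometric-mean-error ($r=0$) setting, for the H\"older-type bounds that drive the analogous $L_r$-quantization result, and its $\eta$-dependence is precisely what allows a two-scale choice of $\eta$ and then $g_k$.
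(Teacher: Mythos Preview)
Your proof is correct and follows essentially the same core strategy as the paper's: assuming two points $a,b$ of an optimal set are too close, remove one, and contradict the gap $\zeta_k$ of Lemma~\ref{lem1} by splitting the error difference on $P_b(\alpha_k)$ into an inner ball (controlled via the H\"older bound~(\ref{holder})) and an outer annulus (controlled by a ratio estimate $d(x,\beta)/d(x,b)\leq 1+2g_k\rho/\eta$). The paper first runs a separate compactness argument---showing that $\phi(\alpha_k):=\min_{a\neq b}d(a,b)$ is continuous on the $d_H$-compact set $C_k(\nu)$ to conclude $\underline{\lambda}_k(\nu):=\min\phi>0$---before establishing the explicit lower bound $\underline{\lambda}_k(\nu)\geq s_k$ in terms of $C,t,k,q$; you skip this detour and go straight to the explicit bound, which is all that is actually needed (and all that is used later, since the lemma is applied to a family of measures with common $C,t$). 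Two further minor differences: for the inner contribution you compute the layer-cake bound $\int_{B(b,\eta)}(-\log d(x,b))\,d\nu\leq C\eta^t(-\log\eta+1/t)$ directly, whereas the paper invokes the entropy-type inequality $I_\nu(A,\{a\})\geq t^{-1}(\nu(A)\log\nu(A)-C\nu(A))$ from \cite{GL:04}; and on the outer piece you use $\nu(P_b(\alpha_k))\leq\overline{d}_k$ from Lemma~\ref{l1}, though the trivial bound $\nu(P_b(\alpha_k))\leq 1$ would already suffice there.
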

\begin{proof}
Let $d_H$ denote the Hausdorff metric. We define $\phi: C_k(\nu)\mapsto\mathbb{R}$ by:
\[
\phi(\alpha_k):=\min_{a\in\alpha_k}\min_{b\in\alpha_k\setminus\{a\}}d(a,b).
\]
We first show that $\phi$ is continuous on $C_k(\nu)$. To see this, it is sufficient to consider an arbitrary accumulation point  (if any) $\alpha_k=\{a_i\}_{i=1}^k$ of $C_k(\nu)$. Assume that
$\beta_{k,n}=\{b_{i,n}\}_{i=1}^k\in C_k(\nu)$ and $d_H(\beta_{k,n},\alpha_k)\to 0$ as $n\to\infty$. Without loss of generality, we assume that $\phi(\alpha_k)=d(a_1,a_2)$. Let $\eta_0:=4^{-1}\phi(\alpha_k)$. Then for every $\epsilon\in (0,\eta_0)$, there exists some $N_0\geq 1$, such that for all $n\geq N_0$, we have $d_H(\beta_{k,n},\alpha_k)<\epsilon$. Thus, for every $1\leq i\leq k$, there exists a unique $1\leq j(i)\leq k$ such that $d(b_{j(i),n},a_i)<\epsilon$. Thus, we rewrite $\beta_{k,n}$ as $\{b_{j(i),n}\}_{i=1}^k$. For $1\leq i_1\neq i_2\leq k$, by the triangle inequality, we have
\begin{eqnarray}\label{ss1}
d(b_{j(i_1),n},b_{j(i_2),n})&\geq& d(a_{i_1},a_{i_2})-d(a_{i_1},b_{j(i_1),n})-d(a_{i_2},b_{j(i_2),n})\nonumber\\
&\geq&d(a_1,a_2)-2\epsilon.
\end{eqnarray}
It follows that $\phi(\beta_{k,n})\geq \phi(\alpha_k)-2\epsilon$ for every $n\geq N_0$. Also, we have
\begin{eqnarray}\label{ss2}
\phi(\beta_{k,n})&\leq& d(b_{j(1),n},b_{j(2),n})\nonumber\\&\leq& d(b_{j(1),n},a_1)+d(a_1,a_2)+d(a_2,b_{j(2),n})\nonumber\\&<&\phi(\alpha_k)+2\epsilon.
\end{eqnarray}
From (\ref{ss1}) and (\ref{ss2}), we obtain that $|\phi(\beta_{k,n})-\phi(\alpha_k)|<2\epsilon$ for every $n\geq N_0$. It follows that $\phi(\beta_{k,n})\to\phi(\alpha_k)$ as $n\to\infty$. Thus, $\phi$ is continuous on $C_k(\nu)$.

By \cite[Theorem 2.5]{GL:04}, $C_k(\nu)$ is $d_H$-compact. Thus, by the continuity of $\phi$, there exist some $\alpha_{k,1}\in C_k(\nu)$, such that
\[
\underline{\lambda}_k(\nu):=\min_{\alpha\in C_k(\nu)}\phi(\alpha)=\phi(\alpha_{k,1}).
\]
Clearly, we have $\underline{\lambda}_k(\nu)>0$. Now let $\alpha_k=\{a_i\}_{i=1}^k$ be an arbitrary $k$-optimal set for $\nu$ and $\{P_a(\alpha_k)\}_{a\in\alpha_k}$ an arbitrary VP with respect to $\alpha_k$. Then
\begin{eqnarray*}
B(a,3^{-1}\underline{\lambda}_k(\nu))\subset P_a(\alpha_k)\;{\rm and}\;|P_a(\alpha_k)\cap K_\nu|\leq|K_\nu|\leq 1.
\end{eqnarray*}

Let $\eta_k,\zeta_k$ be as defined in the preceding lemmas. Next, we establish a lower bound for $\underline{\lambda}_k(\nu)$ in terms of $\eta_k$ and $\zeta_k$ which depend only on $C,t,k,q$. Set
\[
B_k:=\min\big\{\frac{t\zeta_k}{4C},\eta_k\big\};\;\epsilon_k:=\min\big\{\big(\frac{B_k}{2C}\big)^{1/t},2^{-1}\big\};
\;s_k:=\frac{1}{2}(e^{\frac{\zeta_k}{4}}-1)\epsilon_k.
\]
Note that $\zeta_k<1$, we have that $s_k<\epsilon_k$. We are going to show that $\underline{\lambda}_k(\nu)\geq s_k$. Suppose that $\underline{\lambda}_k(\nu)=d(a_1,a_2)<s_k$, we deduce a contradiction. Write
\[
A_{\epsilon_k}:=B(a_1,\epsilon_k)\cap P_{a_1}(\alpha_k);\;\;\beta:=\alpha_k\setminus\{a_1\}.
\]
Then we have $\nu(A_{\epsilon_k})\leq C\epsilon_k^t<\min\{\eta_k,(4C)^{-1}t\zeta_k\}$. By the proof of Lemma \ref{lem11}, we know that $-\frac{1}{t}(\nu(A_{\epsilon_k})\log \nu(A_{\epsilon_k})<2^{-1}\zeta_k$. Further, one can easily see that for every $b\in\beta$ and $x\in P_b(\alpha_k)$, we have $d(x,\alpha_k)=d(x,b)=d(x,\beta)$. Thus,
\begin{eqnarray}\label{append1}
\hat{e}_{k-1}(\nu)-\hat{e}_k(\nu)&\leq& I_\nu(\mathbb{R}^q,\beta)-I_\nu(\mathbb{R}^q,\alpha_k)\nonumber\\&=&I_\nu(P_{a_1}(\alpha_k),\beta)-I_\nu(P_{a_1}(\alpha_k),\alpha_k).
\end{eqnarray}
Note that for $x\in A_{\epsilon_k}$, we have $d(x,\alpha_k)=d(x,a_1)$ and
\[
d(x,\beta)\leq d(x,a_2)\leq d(x,a_1)+d(a_1,a_2)<\epsilon_k+s_k<2\epsilon_k<1.
\]
Using this and \cite[Lemma 3.6]{GL:04}, we deduce
\begin{eqnarray*}
\Delta_1:&=&I_\nu(A_{\epsilon_k},\beta)-I_\nu(A_{\epsilon_k},\alpha_k)\\&\leq&I_\nu(A_{\epsilon_k},\{a_2\})-I_\nu(A_{\epsilon_k},\{a_1\})
\\&\leq&\nu(A_{\epsilon_k})\log(2\epsilon_k)-\frac{1}{t}\big(\nu(A_{\epsilon_k})\log \nu(A_{\epsilon_k})-C\nu(A_{\epsilon_k})\big)\\
&\leq&-\frac{1}{t}\nu(A_{\epsilon_k})\log \nu(A_{\epsilon_k})+\frac{C}{t}\nu(A_{\epsilon_k})\\
&<&2^{-1}\zeta_k+4^{-1}\zeta_k.
\end{eqnarray*}
For every $x\in P_{a_1}(\alpha_k)\setminus A_{\epsilon_k}=:B_{\epsilon_k}$, we have
\[
d(x,\alpha_k)=d(x,a_1)>\epsilon_k,\;d(x,\beta)\leq d(x,a_2)<d(x,a_1)+s_k.
\]
From this, we deduce that
\[
\frac{d(x,\beta)}{d(x,\alpha_k)}\leq\frac{d(x,a_2)}{d(x,a_1)}\leq \frac{d(x,a_1)+s_k}{d(x,a_1)}=1+\frac{s_k}{\epsilon_k}<e^{\frac{\zeta_k}{4}}.
\]
By the preceding inequality and the fact that $\nu(B_{\epsilon_k})<1$, we obtain
\begin{eqnarray*}
\Delta_2:=I_\nu(B_{\epsilon_k},\beta)-I_\nu(B_{\epsilon_k},\alpha_k)\leq\nu(B_{\epsilon_k})\log e^{\frac{\zeta_k}{4}}<\frac{\zeta_k}{4}.
\end{eqnarray*}
From this and (\ref{append1}), we deduce that $\hat{e}_{k-1}(\nu)-\hat{e}_k(\nu)\leq\Delta_1+\Delta_2<\zeta_k$, contradicting Lemma \ref{lem1}.
Thus, $\underline{\lambda}_k(\nu)\geq s_k$ and the proof of the lemma is complete by defining $g_k=3^{-1}s_k$.
\end{proof}
\section{Auxiliary measures and auxiliary integers}

\subsection{Some subsets of $A_\omega^*$ and auxiliary measures}
For a finite subset $\alpha$ of $\mathbb{R}^q$, let $W(a|\alpha),a\in\alpha$, be as defined in (\ref{vregion}).
Let $\delta$ be as defined in (\ref{g3}). Let $\omega,\sigma\in\Omega_k$ with $\sigma\neq\omega$. Let $x_0\in A_\omega\cap K_\mu$. The following three types of subsets of $A_\omega^*$ will be considered in the characterization for the optimal sets for $\mu$:
\begin{eqnarray*}
&&D_{\omega,1}^{(\alpha)}:=E_\omega\cup\bigg(\bigcup_{a\in\alpha\cap (A_\omega)_{\delta|A_\omega|}}(W(a|\alpha)\cap A_\omega^*)\bigg)\setminus B(x_0,2^{-1}\delta|A_\omega|);\\
&&D_{\omega,2}^{(\alpha)}(\sigma):=E_\omega\cup\bigg(\bigcup_{a\in\alpha\cap (A_\omega)_{\delta|A_\omega|}}(W(a|\alpha)\cap A_\omega^*)\bigg)\setminus E_\sigma;\\
&&D_{\omega,3}^{(\alpha)}:=E_\omega\cup\bigg(\bigcup_{a\in\alpha\cap (A_\omega)_{\delta|A_\omega|}}(W(a|\alpha)\cap A_\omega^*)\bigg).
\end{eqnarray*}
If no confusion arises, we write $D_{\omega,i}$ for $D_{\omega,i}^{(\alpha)}$ and write $D_{\omega,2}$ for $D_{\omega,2}^{(\alpha)}(\sigma)$.

For $\omega\in\Omega_k$, recall that $c_\omega$ is the center of $E_\omega$. We define
\[
D_{\omega,4}:=B(c_\omega,(2^{-1}-\delta)|E_\omega|)\subset E_\omega.
\]

For $1\leq i\leq 4$, let $\mu(\cdot|D_{\omega,i})$ denote the conditional measure of $\mu$ on $D_{\omega,i}$:
\begin{equation}\label{auxmeasure}
\mu(\cdot|D_{\omega,i})(A)=\frac{\mu(A\cap D_{\omega,i})}{\mu(D_{\omega,i})},\;A\;{\rm is\;a\; Borel\;set}.
\end{equation}
Let $f_{D_{\omega,i}}$ be a similarity mapping of similarity ratio $|D_{\omega,i}|$ and define
\begin{equation}
\nu_{D_{\omega,i}}:=\mu(\cdot|D_{\omega,i})\circ f_{D_{\omega,i}},\;K_{\nu_{D_{\omega,i}}}:={\rm supp}(\nu_{D_{\omega,i}}).
\end{equation}
Then $\nu_{D_{\omega,i}}$  is a probability measure satisfying $|K_{\nu_{D_{\omega,i}}}|\leq 1$.

In a similar manner, we define the measures $\nu_{E_\sigma},\sigma\in\Omega_k$. We have

\begin{lemma}\label{lem2}
There exists a constant $C$ such that, for $B\in \{D_{\omega,i}\}_{i=1}^4\cup\{E_\omega\}$, we have $\sup_{x\in\mathbb{R}^q}\nu_B(B(x,\epsilon))\leq C\epsilon^{s_0}$ for every $\epsilon>0$.
\end{lemma}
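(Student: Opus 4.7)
The plan is to observe that, for every $B$ in the list, the definition of $\nu_B$ gives
\[
\nu_B(B(x,\epsilon))=\frac{\mu\!\left(f_B(B(x,\epsilon))\cap B\right)}{\mu(B)}\le\frac{\mu\!\left(B(f_B(x),|B|\epsilon)\right)}{\mu(B)}\le\frac{C_2\,|B|^{s_0}\epsilon^{s_0}}{\mu(B)},
\]
where in the last step we used that the upper bound in (\ref{AD}) holds for all points of $\mathbb{R}^q$ and all radii. Thus the proof reduces to producing a constant $C'$, independent of $k$, $\omega$, $\sigma$, and $x_0$, such that $|B|^{s_0}/\mu(B)\le C'$ for every choice of $B\in\{D_{\omega,i}\}_{i=1}^4\cup\{E_\omega\}$; then $C:=C_2C'$ works.

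The first two cases are essentially routine. For $B=E_\omega=B(c_\omega,m^{-k})$, the center lies in $K_\mu$ and the radius is $<\epsilon_0$, so (\ref{AD}) gives $\mu(E_\omega)\ge C_1(|E_\omega|/2)^{s_0}$, and I would just take $C'\ge 2^{s_0}/C_1$. For $B=D_{\omega,4}=B(c_\omega,(2^{-1}-\delta)|E_\omega|)$ the same computation works (using $\delta<1/2$ and $(1-2\delta)m^{-k}<\epsilon_0$). For $B=D_{\omega,2}$ and $B=D_{\omega,3}$, I would note that since $E_\omega\cap E_\sigma=\emptyset$ for $\sigma\ne\omega$, both sets \emph{contain} $E_\omega$, so $\mu(B)\ge\mu(E_\omega)\ge C_1 2^{-s_0}|E_\omega|^{s_0}$, while the upper estimate for the volumes in the proof of Lemma \ref{lem9}(a1) shows $B\subset A_\omega^*\subset B(c_\omega,2(1+2\delta)|A_\omega|)$, giving $|B|\le 8(1+2\delta)|E_\omega|$.

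The main obstacle is $B=D_{\omega,1}$: here we have removed the ball $B(x_0,2^{-1}\delta|A_\omega|)$, which may genuinely hit $E_\omega$, so we must show that the subtraction cannot devour too much measure. The key observation is that $2^{-1}\delta|A_\omega|=\delta|E_\omega|$, so by (\ref{AD}) (noting $\delta|E_\omega|<\epsilon_0$),
\[
\mu\!\left(B(x_0,\delta|E_\omega|)\right)\le C_2\delta^{s_0}|E_\omega|^{s_0}.
\]
The whole point of the choice $\delta=\frac{1}{16}(C_1/C_2)^{1/s_0}$ in (\ref{g3}) is that $C_2\delta^{s_0}=C_1\cdot 16^{-s_0}$, which is strictly smaller than $C_1\cdot 2^{-s_0}$; since $D_{\omega,1}\supset E_\omega\setminus B(x_0,\delta|E_\omega|)$, we obtain
\[
\mu(D_{\omega,1})\ge C_1(2^{-s_0}-16^{-s_0})|E_\omega|^{s_0}>0.
\]
Combined with $|D_{\omega,1}|\le 8(1+2\delta)|E_\omega|$ (again from $D_{\omega,1}\subset A_\omega^*$), this gives a bound of the required form.

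Collecting the case constants, I would set
\[
C':=\max\!\left\{\frac{2^{s_0}}{C_1},\ \frac{(8(1+2\delta))^{s_0}}{C_1 2^{-s_0}},\ \frac{(8(1+2\delta))^{s_0}}{C_1(2^{-s_0}-16^{-s_0})}\right\}
\]
and conclude $\nu_B(B(x,\epsilon))\le C_2C'\epsilon^{s_0}$ uniformly in $x,\epsilon,\omega,\sigma,x_0,k$, which is exactly the statement of the lemma. The only subtle point is the $D_{\omega,1}$ case, and it is handled precisely by the calibration of $\delta$ already recorded in the remark after (\ref{z7}).
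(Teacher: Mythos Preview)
Your proof is correct and follows essentially the same route as the paper: both reduce the claim to a uniform lower bound $\mu(B)\ge c\,|B|^{s_0}$, and both handle the delicate $D_{\omega,1}$ case via the same subtraction estimate $\mu(E_\omega)-\mu(B(x_0,\delta|E_\omega|))\ge C_1(2^{-s_0}-16^{-s_0})|E_\omega|^{s_0}$, exploiting the calibration of $\delta$. The only differences are cosmetic: the paper packages your first displayed inequality as a citation to \cite[Lemma 2.5]{Zhu:20} rather than writing it out, and it also records the companion upper bound $\mu(B)\le\xi|B|^{s_0}$ (not needed for the lemma itself, but used immediately afterward in Remark~\ref{rem2}).
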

\begin{proof}
By the definition, $D_{\omega,i}\subset A_\omega^*$ and $A_\omega^*\subset B(c_\omega, 2(1+2\delta)|A_\omega|)$. Hence,
\begin{equation}\label{g6}
|D_{\omega,i}|\leq |A_\omega^*|\leq 4(1+2\delta)|A_\omega|,\;1\leq i\leq 3.
\end{equation}
Since the diameter of $B(x_0,2^{-1}\delta|A_\omega|)$ is equal to $\delta|A_\omega|$, we have
\begin{equation}\label{zz1}
(1-2\delta)|E_\omega|\leq\bigg|E_\omega\setminus B\big(x_0,2^{-1}\delta|A_\omega|\big)\bigg|\leq|D_{\omega,1}|\leq4(1+2\delta)|A_\omega|.
\end{equation}
It follows that $|E_\omega|=2^{-1}|A_\omega|\geq(8(1+2\delta))^{-1}|D_{\omega,1}|$. This and (\ref{AD}) yield that
\begin{eqnarray*}
\mu(D_{\omega,1})&\geq&\mu(E_\omega\setminus B\big(x_0,2^{-1}\delta|A_\omega|)\\&\geq&\mu(E_\omega)-\mu(B\big(x_0,2^{-1}\delta|A_\omega|))\\&\geq& C_1(2^{-1}|E_\omega|)^{s_0}-C_2(2^{-1}\delta|A_\omega|)^{s_0}\\&\geq&C_1(2^{-1}|E_\omega|)^{s_0}-C_1(16^{-1}|E_\omega|)^{s_0}
\\&=&C_1(2^{-s_0}-16^{-s_0})|E_\omega|^{s_0}\\&\geq&C_1(2^{-s_0}-16^{-s_0})(8(1+2\delta))^{-s_0}|D_{\omega,1}|^{s_0}.
\end{eqnarray*}
We write $\xi_1:=C_1(2^{-s_0}-16^{-s_0})(8(1+2\delta))^{-s_0}$. On the other hand, by (\ref{zz1}),
\begin{eqnarray*}
\mu(D_{\omega,1})&\leq&\mu(A_\omega^*)\leq C_2(2(1+2\delta)|A_\omega|)^{s_0}\\&\leq& C_2(4(1+2\delta))^{s_0}(1-2\delta)^{-s_0}|D_{\omega,1}|^{s_0}=:\xi_2|D_{\omega,1}|)^{s_0}.
\end{eqnarray*}
Note that for distinct words $\sigma,\omega\in\Omega_k$, we have $E_\sigma\cap E_\omega=\emptyset$. Thus, for $i=2,3$, we have $E_\omega\subset D_{\omega,i}\subset A_\omega^*$. Using these facts and (\ref{g6}), we deduce
\begin{eqnarray*}
\mu(D_{\omega,i})&\leq&\mu(A_\omega^*)\leq C_2(2(1+2\delta)|A_\omega|)^{s_0}\\&\leq& C_24^{s_0}(1+2\delta)^{s_0}|E_\omega|^{s_0}\leq C_24^{s_0}(1+2\delta)^{s_0}|D_{\omega,i}|^{s_0}=:\xi_3|D_{\omega,i}|^{s_0};\\
\mu(D_{\omega,i})&\geq&\mu(E_\omega)\geq C_1(2^{-1}|E_\omega|)^{s_0}\geq C_1 4^{-s_0}|A_\omega|^{s_0}\\
&\geq& C_1 4^{-s_0}(4(1+2\delta))^{-s_0}|D_{\omega,i}|^{s_0}=:\xi_4|D_{\omega,i}|^{s_0}.
\end{eqnarray*}
For every $\omega\in\Omega_k$, we have
\begin{eqnarray*}
C_1(2^{-1}|E_\omega|)^{s_0})\leq\mu(E_\omega)\leq C_2(2^{-1}|E_\omega|)^{s_0});\\
C_1(2^{-1}|D_{\omega,4}|)^{s_0})\leq\mu(D_{\omega,4})\leq C_2(2^{-1}|D_{\omega,4}|)^{s_0})
\end{eqnarray*}
We define $\xi:=\max\{\xi_1^{-1},\xi_4^{-1},\xi_2,\xi_3\}$. Then by the above analysis, we obtain
\begin{equation}\label{xi}
\xi^{-1}|B|^{s_0}\leq\mu(B)\leq \xi|B|^{s_0}.
\end{equation}
for $B\in\{D_{\omega,i}\}_{i=1}^4\cup\{E_\omega\}$. Thus, the lemma follows from \cite[Lemma 2.5]{Zhu:20}.
\end{proof}
\begin{remark}\label{rem2}
Let $\xi$ be as defined in (\ref{xi}). For $1\leq i\leq 3$, we have
\begin{eqnarray*}
\mu(D_{\omega,i})&\leq&\xi|D_{\omega,i}|^{s_0}\\&\leq& \xi|A_\omega^*|^{s_0}\leq \xi(4(1+2\delta)|A_\omega|)^{s_0}\\&\leq&
 \xi(8(1+2\delta))^{s_0}|E_\omega|^{s_0}\\&\leq&\xi(8(1+2\delta))^{s_0}C_1^{-1}(2^{-1}-\delta)^{-s_0}\min_{\sigma\in\Omega_k}\mu(D_{\sigma,4}).
\end{eqnarray*}
Let $\zeta:=\xi(8(1+2\delta))^{s_0}C_1^{-1}(2^{-1}-\delta)^{-s_0}$. Then for every $\sigma\in\Omega_k$, we have
\[
\max_{1\leq i\leq 3}\mu(D_{\omega,i})\leq\zeta\mu(D_{\sigma,4})\leq \zeta\mu(E_\sigma).
\]
\end{remark}

In the following we denote by $f_B$ the similarity mapping in the definition of the measure $\nu_B$. The subsequent two lemmas will be very important for the characterization for the optimal sets. One of them is a consequence of the definition of the auxiliary measures $\nu_B$, and the other is based on Lemma \ref{lem9}.
\begin{lemma}\label{l3}
Let $B\in\{D_{\omega,i}\}_{i=1}^4\cup\{E_\omega\}$. Let $\alpha$ be a non-empty finite subset of $\mathbb{R}^q$ with ${\rm card}(\alpha)=l_\alpha$. Then
$I_\mu(B,\alpha)\geq\mu(B)\log |B|+\mu(B)\hat{e}_{l_\alpha}(\nu_B)$, and equality holds if $f_B^{-1}(\alpha)\in C_{l_\alpha}(\nu_B)$.
\end{lemma}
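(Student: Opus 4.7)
The plan is to prove this as a direct change-of-variables reduction from $(\mu, B)$ to the normalized measure $(\nu_B, \mathbb{R}^q)$, exploiting the fact that for the logarithmic integrand, a similarity rescaling contributes only an additive constant. First I would unfold the definitions: since $\mu$ restricted to $B$ is exactly $\mu(B)\,\mu(\cdot|B)$, and the latter is a probability measure, I write
\[
I_\mu(B,\alpha) \;=\; \int_B \log d(x,\alpha)\, d\mu(x) \;=\; \mu(B) \int \log d(x,\alpha)\, d\mu(\cdot|B)(x).
\]

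Next, I would apply the change of variables $x=f_B(y)$. Because $\nu_B = \mu(\cdot|B)\circ f_B$ is the pushforward of $\mu(\cdot|B)$ under $f_B^{-1}$, and because $f_B$ is a similarity of ratio $|B|$, one has $d(f_B(y),\alpha) = |B|\,d(y, f_B^{-1}(\alpha))$. Substituting and using $\nu_B(\mathbb{R}^q)=1$ yields
\[
\int \log d(x,\alpha)\, d\mu(\cdot|B)(x) \;=\; \log|B| \;+\; \int \log d(y, f_B^{-1}(\alpha))\, d\nu_B(y) \;=\; \log|B| + I_{\nu_B}(\mathbb{R}^q, f_B^{-1}(\alpha)).
\]
Combining the two displays gives the exact identity
\[
I_\mu(B,\alpha) \;=\; \mu(B)\log|B| \;+\; \mu(B)\, I_{\nu_B}(\mathbb{R}^q, f_B^{-1}(\alpha)).
\]

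Finally, since $f_B$ is a bijection of $\mathbb{R}^q$, the set $f_B^{-1}(\alpha)$ has cardinality $l_\alpha$, so it belongs to $\mathcal{D}_{l_\alpha}$; the definition of $\hat{e}_{l_\alpha}(\nu_B)$ as an infimum over $\mathcal{D}_{l_\alpha}$ then gives $I_{\nu_B}(\mathbb{R}^q, f_B^{-1}(\alpha))\geq \hat{e}_{l_\alpha}(\nu_B)$, with equality exactly when $f_B^{-1}(\alpha)\in C_{l_\alpha}(\nu_B)$. Multiplying by $\mu(B)>0$ (positivity follows from Lemma \ref{lem2} and the fact that $E_\omega\subset B$ or $D_{\omega,4}\subset B$ in every admissible case) produces the asserted inequality and equality criterion.

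There is no real analytic obstacle here; the proof is a bookkeeping exercise. The only point that warrants a brief sanity check is the scaling identity $d(f_B(y),\alpha)=|B|\,d(y,f_B^{-1}(\alpha))$ for the similarity $f_B$, together with the observation—crucial to the whole paper's philosophy—that in the geometric mean ($r=0$) setting this rescaling gives the \emph{additive} correction $\mu(B)\log|B|$, in contrast to the multiplicative $|B|^r$ factor that appears in the $L_r$-case.
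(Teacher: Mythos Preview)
Your proof is correct and follows exactly the same route as the paper's: unwind the definition of $\nu_B$, change variables via the similarity $f_B$ to split off the additive term $\log|B|$, and then invoke the definition of $\hat{e}_{l_\alpha}(\nu_B)$ as an infimum. The only difference is that you spell out a couple of routine justifications (bijectivity of $f_B$ preserving cardinality, positivity of $\mu(B)$) that the paper leaves implicit.
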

\begin{proof}
By the definition of $\nu_B$ (cf. (\ref{auxmeasure})), we have
\begin{eqnarray*}
I_\mu(B,\alpha)&=&\int_{B}\log d(x,\alpha)d\mu(x)\\
&=&\mu(B)\int_{B}\log d(x,\alpha)d\mu(\cdot|B)(x)\\&=&\mu(B)\int_{B}\log d(x,\alpha)d\nu_B\circ f_B^{-1}(x)\\
&=&\mu(B)\log |B|+\mu(B)\int_{f_B^{-1}(B)}\log d(x,f^{-1}_B(\alpha))d\nu_B(x)\\&\geq&\mu(B)\log |B|+\mu(B)\hat{e}_{l_\alpha}(\nu_B).
\end{eqnarray*}
This completes the proof of the lemma.
\end{proof}

Let $\omega,\tau,\sigma\in\Omega_k$ with $\sigma\neq\omega$ and $\tau\in\Omega_k\setminus\mathcal{A}_\omega$. Let $D_{\omega,i},1\leq i\leq 3$, be as defined above. We write
\[
F_{\omega,i}=\left\{\begin{array}{ll}D_{\omega,1}\cup B(x_0,2^{-1}\delta|A_\omega|)&\;\;\;\;i=1\\
D_{\omega,2}\cup E_\sigma&\;\;\;\;i=2
\\D_{\omega,3}\cup D_{\tau,4}&\;\;\;\;i=3\end{array}\right..
\]
\begin{lemma}\label{compare2}
Let $\alpha,\gamma$ be non-empty finite subsets of $\mathbb{R}^q$. Let $B_\omega$ be as defined in Remark \ref{rem0}. We define
$\beta:=(\alpha\setminus(A_\omega)_{\delta|A_\omega|})\cup B_\omega\cup\gamma$. Then
\[
I_\mu(\mathbb{R}^q\setminus F_{\omega,i},\beta)\leq I_\mu(\mathbb{R}^q\setminus F_{\omega,i},\alpha),\;\;1\leq i\leq 3.
\]
\end{lemma}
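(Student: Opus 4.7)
The plan is to establish the pointwise inequality $d(x,\beta)\le d(x,\alpha)$ for every $x\in K_\mu\setminus F_{\omega,i}$ and each $i\in\{1,2,3\}$; since $\mu$ is supported on $K_\mu$, monotonicity of the logarithm then yields
\[
\int_{\mathbb{R}^q\setminus F_{\omega,i}}\log d(x,\beta)\,d\mu(x)\le\int_{\mathbb{R}^q\setminus F_{\omega,i}}\log d(x,\alpha)\,d\mu(x),
\]
which is exactly the asserted inequality. Note that $\gamma$ only enlarges $\beta$, so it only shrinks $d(\cdot,\beta)$ and hence never obstructs the argument; the genuine content is to control what happens when $B_\omega$ replaces the part of $\alpha$ lying inside $(A_\omega)_{\delta|A_\omega|}$.

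The first step is an elementary set-theoretic observation: for each $i\in\{1,2,3\}$,
\[
F_{\omega,i}\supset G_\omega:=E_\omega\cup\bigcup_{a\in\alpha\cap(A_\omega)_{\delta|A_\omega|}}\bigl(W(a|\alpha)\cap A_\omega^*\bigr).
\]
Unpacking the definitions, $D_{\omega,i}$ is $G_\omega$ with a specific piece removed (the ball $B(x_0,2^{-1}\delta|A_\omega|)$ when $i=1$, the disjoint cell $E_\sigma$ when $i=2$, nothing when $i=3$), and $F_{\omega,i}$ is then $D_{\omega,i}$ together with a set containing that removed piece, so the inclusion is automatic.

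With this in hand, fix $x\in K_\mu\setminus F_{\omega,i}$, so in particular $x\notin G_\omega$, and pick $\tau\in\Omega_k$ with $x\in A_\tau$ (using $K_\mu\subset\bigcup_\tau A_\tau$). If $\tau\notin\mathcal{A}_\omega$, then Lemma \ref{lem9}(a2) applied to $\alpha$ gives $d(x,(\alpha\setminus(A_\omega)_{\delta|A_\omega|})\cup B_\omega)\le d(x,\alpha)$, and since $\beta\supset(\alpha\setminus(A_\omega)_{\delta|A_\omega|})\cup B_\omega$, we conclude $d(x,\beta)\le d(x,\alpha)$. If instead $\tau\in\mathcal{A}_\omega$, then $x\in A_\omega^*$, and $x\notin G_\omega$ forces $x\notin W(a|\alpha)$ for every $a\in\alpha\cap(A_\omega)_{\delta|A_\omega|}$; hence the nearest point of $\alpha$ to $x$ must lie outside $(A_\omega)_{\delta|A_\omega|}$, giving $d(x,\alpha)=d(x,\alpha\setminus(A_\omega)_{\delta|A_\omega|})\ge d(x,\beta)$ because $\alpha\setminus(A_\omega)_{\delta|A_\omega|}\subset\beta$. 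The only step I expect to need genuine care is the containment $F_{\omega,i}\supset G_\omega$, which relies on parsing the set differences in the definitions of $D_{\omega,1}$ and $D_{\omega,2}$ correctly; after that, the pointwise comparison, and hence the lemma, follows immediately from Lemma \ref{lem9}(a2) and the defining property of the Voronoi regions, with no additional analytic estimate required.
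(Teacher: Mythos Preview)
Your proposal is correct and follows essentially the same approach as the paper's own proof: both establish the pointwise inequality $d(x,\beta)\le d(x,\alpha)$ on $K_\mu\setminus F_{\omega,i}$ by splitting into the cases $x\in A_\omega^*$ (where the containment $G_\omega\subset F_{\omega,i}$ forces the nearest point of $\alpha$ to lie outside $(A_\omega)_{\delta|A_\omega|}$) and $x\notin A_\omega^*$ (handled by Lemma~\ref{lem9}(a2)). Your presentation is in fact slightly more explicit than the paper's in verifying the set containment $F_{\omega,i}\supset G_\omega$ for each $i$.
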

\begin{proof}
Let $1\leq i\leq 3$ be fixed. By the definition of $D_{\omega,i}$ and $F_{\omega,i}$, we have
\[
\{x\in A_\omega^*: d(x,\alpha)=d(x,\alpha\cap(A_\sigma)_{\delta|A_\sigma|})\}\subset F_{\omega,i}.
\]
Therefore, for every $x\in \mathbb{R}^q\setminus F_{\omega,i}$, we have the following two cases:
\begin{enumerate}
\item[(b1)] $x\in A_\omega^*$ and $d(x,\alpha)=d(x,\alpha\setminus(A_\sigma)_{\delta|A_\sigma|})$; then clearly $d(x,\beta)\leq d(x,\alpha)$;
\item[(b2)] $x\in K_\mu\setminus A_\omega^*$. Note that $K_\mu\subset\bigcup_{\tau\in\Omega_k}A_\tau$. Thus, $x\in A_\tau$ for some $\tau\in\Omega_k\setminus\mathcal{A}_\omega$. By Lemma \ref{lem9} (a2), we also have, $d(x,\beta)\leq d(x,\alpha)$.
\end{enumerate}
Thus, $d(x,\beta)\leq d(x,\alpha)$ for every $x\in \mathbb{R}^q\setminus F_{\omega,i}$, which implies the lemma.
\end{proof}
\subsection{Selection of some auxiliary integers}

Let $L_0$ be as defined in Remark \ref{rem0}. We define
\begin{eqnarray*}
L_1:=[(2\delta^{-1}+1)^q]+1,\;\;L_2:=6^q;\;\;n_0:=[(4\delta^{-1}+1)^q]+1.
\end{eqnarray*}

\begin{remark}\label{rem4}
By estimating the volumes, one can see the following facts:
\begin{enumerate}
\item[(c1)] for each $\sigma\in\Omega_k$, the set $E_\sigma$ can be covered by $L_1$ closed balls of radii $2^{-1}\delta|E_\sigma|$ which are centered in $E_\sigma$. We denote by $\gamma_{E_\sigma}$ the set of the centers of such $L_1$ balls.

\item[(c2)] for $x\in A_\sigma\cap K_\mu$, the ball $B(x,2^{-1}\delta|A_\sigma|)$ can always be covered by $L_2$ closed balls of radii $4^{-1}\delta|A_\sigma|$ which are centered in $B(x,2^{-1}\delta|A_\sigma|)$. We will denote by $G_x$ the set of the centers of such $L_2$ closed balls.
\item[(c3)]$A_\sigma$ can be covered by $n_0$ closed balls of radii $4^{-1}\delta|A_\sigma|$ which are centered in $A_\sigma$. We denote by $H_\sigma$ the set of the centers of such $n_0$ balls.
\end{enumerate}
\end{remark}

With the above preparations, we are able to define an integer $n_1$ which will be used to give a lower estimate for the number of optimal points in $(A_\sigma)_{\delta|A_\sigma|}$.
\begin{lemma}\label{lem10}
Let $\zeta$ be as defined in Remark \ref{rem2}. There exists a smallest integer $n_1$ such that for every $\omega\in\Omega_k$ and $n\geq n_1$, we have
\[
\hat{e}_{n-L_0-L_2}(\nu_{D_{\omega,2}})-\hat{e}_{n+L_1}(\nu_{D_{\omega,2}})<\zeta^{-1}C_1C_2^{-1}\delta^{s_0}\log 2.
\]
\end{lemma}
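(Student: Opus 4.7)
The plan is to reduce the statement to the well-known fact that the consecutive differences $\hat{e}_n(\nu)-\hat{e}_{n+1}(\nu)$ tend to zero for any measure $\nu$ satisfying (\ref{holder}), and then telescope over a bounded number of indices.

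First, I would apply Lemma \ref{lem2} to the measures $\nu_{D_{\omega,2}}$. It yields that each of them satisfies the hypothesis (\ref{holder}) with $t=s_0$ and with a constant $C$ determined solely by $C_1,C_2,s_0,\delta$; in particular, $C$ is independent of both $\omega\in\Omega_k$ and $k$. Moreover, $|K_{\nu_{D_{\omega,2}}}|\leq 1$ follows from the choice of the similarity ratio in the definition of $\nu_{D_{\omega,2}}$. Consequently, the bound exhibited in the proof of Lemma \ref{l1},
\[
\hat{e}_j(\nu_{D_{\omega,2}})-\hat{e}_{j+1}(\nu_{D_{\omega,2}}) \leq \frac{\log 3}{j+1}+\frac{2\sqrt{C}}{s_0}\Bigl(\frac{1}{j+1}\Bigr)^{1/2} =: \chi_j,
\]
is valid uniformly in $\omega\in\Omega_k$ and in $k$.

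Next I would telescope:
\[
\hat{e}_{n-L_0-L_2}(\nu_{D_{\omega,2}}) - \hat{e}_{n+L_1}(\nu_{D_{\omega,2}}) = \sum_{j=n-L_0-L_2}^{n+L_1-1} \bigl(\hat{e}_j(\nu_{D_{\omega,2}}) - \hat{e}_{j+1}(\nu_{D_{\omega,2}})\bigr) \leq \sum_{j=n-L_0-L_2}^{n+L_1-1} \chi_j.
\]
The right-hand side is a sum of exactly $L_0+L_1+L_2$ terms, a fixed number depending only on $q$ and $\delta$. Since $\chi_j \to 0$ as $j\to\infty$, this finite sum itself tends to zero as $n\to\infty$. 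Thus for all sufficiently large $n$ the displayed difference is strictly smaller than $\zeta^{-1}C_1 C_2^{-1}\delta^{s_0}\log 2$, and the smallest such $n$ serves as $n_1$.

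The only subtlety, which is really the main point worth flagging, is ensuring that $n_1$ can be taken independently of $\omega$ and $k$; this is what allows the lemma to be applied later when $k$ is varied. That uniformity is automatic from Lemma \ref{lem2}, whose constant $\xi$ (and therefore $C$) depends only on $C_1,C_2,s_0,\delta$. Once this observation is in place, the lemma is a direct telescoping estimate and no further ingredients are needed.
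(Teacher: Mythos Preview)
Your argument is correct and follows essentially the same route as the paper: both invoke Lemma~\ref{lem2} to obtain a uniform constant $C$ for the measures $\nu_{D_{\omega,2}}$, then use the estimate from \cite[Lemma 5.8]{GL:04} (which you cite via the $\chi_j$ bound in Lemma~\ref{l1}) to show that the telescoped sum of $L_0+L_1+L_2$ consecutive differences tends to zero. Your explicit emphasis on the uniformity of $n_1$ in $\omega$ and $k$ is a useful clarification that the paper's terse proof leaves implicit.
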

\begin{proof}
By Lemma \ref{lem2} and \cite[Lemma 5.8]{GL:04}, we deduce,
\begin{eqnarray*}
&&\lim_{n\to\infty}(\hat{e}_{n-L_0-L_2}(\nu_{D_{\omega,2}})-\hat{e}_{n+L_1}(\nu_{D_{\omega,2}}))\\
&&\;\;\;\;\;\;\;\;=\sum_{h=-(L_0+L_2)}^{L_1-1}\lim_{n\to\infty}(\hat{e}_{n+h}(\nu_{D_{\omega,2}})-\hat{e}_{n+h+1}(\nu_{D_{\omega,2}}))=0.
\end{eqnarray*}
This implies the lemma.
\end{proof}

By \cite[Lemma 2.1]{Zhu:20}, there exists an integer $N$ such that $\phi_{k+1}\leq N\phi_k$. Next, we select three more integers $n_2,n_3, n_4$. These integers will be used to establish an upper bound for the number of optimal points in $(A_\sigma)_{\delta|A_\sigma|}$.

\begin{lemma}\label{lem6}
Let $\zeta$ and $M_0$ be as defined in Remark \ref{rem2} and Lemma \ref{lem9}. Then

 (d1) there exists a smallest integer $n_2>n_1+L_0+L_1$, such that for every $n\geq n_2$, $\sigma,\omega\in\Omega_k$, the following holds:
\[
\hat{e}_{n-L_0-n_1-L_1}(\nu_{D_{\omega,2}})-\hat{e}_{n+L_1}(\nu_{D_{\omega,2}})
<\zeta^{-1}\big(\hat{e}_{n_1+L_1-1}(\nu_{E_\sigma})-\hat{e}_{n_1+L_1}(\nu_{E_\sigma})\big);
\]

(d2) let $n_3:=(n_2+n_0)N$; there exists a smallest integer $n_4>M_0n_3+L_0+L_1$, such that for $n\geq n_4$ and every pair $\sigma,\omega\in\Omega_k$, the following holds:
\[
\hat{e}_{n-L_0-n_3-L_1}(\nu_{D_{\sigma,3}})-\hat{e}_{n+L_1}(\nu_{D_{\sigma,3}})
<\zeta^{-1}\big(\hat{e}_{n_3+L_1-1}(\nu_{D_{\omega,4}})-\hat{e}_{n_3+L_1}(\nu_{D_{\omega,4}})\big).
\]
\end{lemma}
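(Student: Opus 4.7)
\textbf{Proof proposal for Lemma \ref{lem6}.} The plan is to mimic the proof of Lemma \ref{lem10}, exploiting two ingredients that are uniform in the finitely many choices of $\omega,\sigma\in\Omega_k$: a uniform upper bound on consecutive gaps $\hat{e}_n(\nu_B)-\hat{e}_{n+1}(\nu_B)$ that tends to $0$, and a uniform strictly positive lower bound on a single gap appearing on the right-hand side. The finiteness of $\Omega_k$ will let us replace pointwise statements by uniform ones for free.

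For part (d1), I would first rewrite the left-hand side as a telescoping sum
\[
\hat{e}_{n-L_0-n_1-L_1}(\nu_{D_{\omega,2}})-\hat{e}_{n+L_1}(\nu_{D_{\omega,2}})=\sum_{h=-(L_0+n_1+L_1)}^{L_1-1}\bigl(\hat{e}_{n+h}(\nu_{D_{\omega,2}})-\hat{e}_{n+h+1}(\nu_{D_{\omega,2}})\bigr).
\]
By Lemma \ref{lem2} and \cite[Lemma 5.8]{GL:04}, each measure $\nu_{D_{\omega,2}}$ satisfies $\hat{e}_m(\nu_{D_{\omega,2}})-\hat{e}_{m+1}(\nu_{D_{\omega,2}})\leq\chi_m$ with $\chi_m\to 0$, where $\chi_m$ depends only on $C$ and $s_0$ and not on $\omega$. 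Since the number of terms $L_0+n_1+2L_1$ in the telescoping sum is a fixed constant independent of $n$, the entire left-hand side is dominated by a sum of a bounded number of $\chi$-terms and hence tends to $0$ as $n\to\infty$, \emph{uniformly} in $\omega\in\Omega_k$. For the right-hand side, Lemma \ref{lem1} applied to $\nu_{E_\sigma}$ (which satisfies (\ref{holder}) with the same constants by Lemma \ref{lem2}) gives the strictly positive lower bound $\hat{e}_{n_1+L_1-1}(\nu_{E_\sigma})-\hat{e}_{n_1+L_1}(\nu_{E_\sigma})\geq\zeta_{n_1+L_1}>0$, a quantity depending only on $C, s_0, q, n_1, L_1$ and therefore independent of $\sigma$. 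Hence for all sufficiently large $n$, the left-hand side falls strictly below $\zeta^{-1}\zeta_{n_1+L_1}$, simultaneously for every pair $(\omega,\sigma)\in\Omega_k\times\Omega_k$. Taking $n_2$ to be the smallest such integer that also exceeds $n_1+L_0+L_1$ completes the proof of (d1).

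Part (d2) is proved by the same template after replacing $\nu_{D_{\omega,2}}$ by $\nu_{D_{\sigma,3}}$ and $\nu_{E_\sigma}$ by $\nu_{D_{\omega,4}}$. The telescoping identity now has $L_0+n_3+2L_1$ terms, still a constant depending only on previously fixed data, so by the same uniform bound $\chi_m$ the left-hand side tends to $0$ uniformly in $\sigma\in\Omega_k$. The right-hand side is bounded below by $\zeta^{-1}\zeta_{n_3+L_1}>0$ by Lemma \ref{lem1} applied to $\nu_{D_{\omega,4}}$, uniformly in $\omega\in\Omega_k$. One then takes $n_4$ to be the smallest integer exceeding $M_0n_3+L_0+L_1$ for which the desired inequality holds.

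The main obstacle is really just bookkeeping, namely verifying that the Hölder constant in (\ref{holder}) and the quantity $\zeta_k$ from Lemma \ref{lem1} are genuinely independent of the auxiliary index $\omega$ or $\sigma$ entering the picture. This is guaranteed by Lemma \ref{lem2}, which provides a single constant $C$ (depending only on $\mu$ and $k$ through $\delta$ and $s_0$) valid simultaneously for every member of $\{D_{\omega,i}\}_{i=1}^{4}\cup\{E_\omega\}$. With this uniformity in hand, the argument reduces to the elementary observation that a finite sum of sequences tending to $0$ tends to $0$, and that the maximum over a finite index set of a collection of convergent sequences inherits that convergence.
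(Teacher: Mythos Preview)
Your proposal is correct and is precisely the argument the paper has in mind (its own proof is the one-line ``This is a consequence of \cite[Lemma 5.8]{GL:04} and Lemmas \ref{lem1} and \ref{lem2}''). One small point of framing: the uniformity you need must hold not merely over the finitely many $\omega,\sigma\in\Omega_k$ for a fixed $k$, but over \emph{all} $k\geq k_0$ and all auxiliary finite sets $\alpha$ entering the definition of $D_{\omega,2}$ and $D_{\sigma,3}$; this is indeed delivered by Lemma~\ref{lem2} (the constant $C$ there depends only on $C_1,C_2,s_0,\delta$), so your final paragraph is the right justification, whereas the opening appeal to finiteness of $\Omega_k$ by itself would not be enough.
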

\begin{proof}
This is a consequence of \cite[Lemma 5.8]{GL:04} and Lemmas \ref{lem1} and \ref{lem2}.
\end{proof}

\section{A characterization for the $n$-optimal sets}

Our first lemma in this section is analogous to \cite[Lemma 4.1]{Zhu:20}.
\begin{lemma}\label{lc}
We have $L_c:={\rm card}(\alpha_n\setminus\bigcup_{\sigma\in\Omega_k}(A_\sigma)_{\delta|A_\sigma|})\leq n_0\phi_k$.
\end{lemma}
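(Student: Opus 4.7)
The plan is to argue by contradiction: suppose $L_c>n_0\phi_k$ and construct from $\alpha_n$ a set $\beta$ with strictly fewer than $n$ points and $I_\mu(\mathbb{R}^q,\beta)\le I_\mu(\mathbb{R}^q,\alpha_n)$. Since $K_\mu$ is infinite (it carries a nontrivial Ahlfors--David measure), $\hat{e}_m(\mu)$ is strictly decreasing in $m$ by \cite[Theorem 4.12]{GL:00}, so the existence of such a $\beta$ would contradict the optimality of $\alpha_n$:
\[
\hat{e}_n(\mu)\ge I_\mu(\mathbb{R}^q,\beta)\ge\hat{e}_{{\rm card}(\beta)}(\mu)>\hat{e}_n(\mu).
\]

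For the construction I would set
\[
H:=\bigcup_{\sigma\in\Omega_k}H_\sigma,\qquad \beta:=\Big(\alpha_n\cap\bigcup_{\sigma\in\Omega_k}(A_\sigma)_{\delta|A_\sigma|}\Big)\cup H,
\]
using the $n_0$-point covers $H_\sigma$ of $A_\sigma$ supplied by Remark \ref{rem4}~(c3). Then ${\rm card}(H)\le n_0\phi_k$, so ${\rm card}(\beta)\le(n-L_c)+n_0\phi_k<n$ under the contradiction hypothesis. The heart of the proof is then to verify $d(x,\beta)\le d(x,\alpha_n)$ for every $x\in K_\mu$.

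For this pointwise comparison, fix $x\in K_\mu$ and, using $K_\mu\subset\bigcup_{\sigma\in\Omega_k}A_\sigma$, choose $\sigma\in\Omega_k$ with $x\in A_\sigma$. Remark \ref{rem4}~(c3) provides $h\in H_\sigma\subset\beta$ with $d(x,h)\le 4^{-1}\delta|A_\sigma|$. On the other hand, for any $a\in\alpha_n\setminus\bigcup_{\tau\in\Omega_k}(A_\tau)_{\delta|A_\tau|}$ we have in particular $a\notin(A_\sigma)_{\delta|A_\sigma|}$, whence $d(x,a)\ge d(a,A_\sigma)>\delta|A_\sigma|>d(x,h)\ge d(x,\beta)$. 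Every remaining point of $\alpha_n$ already lies in $\beta$, so the infimum defining $d(x,\alpha_n)$ cannot be smaller than $d(x,\beta)$. Monotonicity of $\log$ on $(0,\infty)$ and integration against $\mu$ (noting that $d(\cdot,\beta)$ and $d(\cdot,\alpha_n)$ are positive $\mu$-a.e., since the two sets are finite and $K_\mu$ is infinite) then yield the required inequality $I_\mu(\mathbb{R}^q,\beta)\le I_\mu(\mathbb{R}^q,\alpha_n)$.

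I do not anticipate a genuine obstacle: the geometric replacement is clean, the cardinality bookkeeping is immediate from Remark \ref{rem4}~(c3), and the strict monotonicity of $\hat{e}_n(\mu)$ needed to close the argument is on hand. The only tiny point of care is that the $\log$-integral comparison is unaffected by the $\mu$-null coincidence sets $\{x: d(x,\alpha_n)=0\}$ and $\{x: d(x,\beta)=0\}$; this is automatic.
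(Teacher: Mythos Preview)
Your proposal is correct and uses exactly the same competitor set $\beta=(\alpha_n\cap\bigcup_\sigma(A_\sigma)_{\delta|A_\sigma|})\cup\bigcup_\sigma H_\sigma$ as the paper, together with the same pointwise inequality $d(x,\beta)\le d(x,\alpha_n)$ on $K_\mu$. The only difference is in how the contradiction is closed: the paper keeps ${\rm card}(\beta)\le n$ and exhibits an explicit ball $B_x=B(x,4^{-1}\delta|A_\sigma|)$ (around any $x\in K_\mu$ whose nearest $\alpha_n$-point lies outside $\bigcup_\sigma(A_\sigma)_{\delta|A_\sigma|}$) on which $d(\cdot,\alpha_n)\ge\tfrac34\delta|A_\sigma|$ while $d(\cdot,\beta)\le\tfrac14\delta|A_\sigma|$, yielding $I_\mu(\mathbb{R}^q,\alpha_n)-I_\mu(\mathbb{R}^q,\beta)\ge\mu(B_x)\log 3>0$ directly. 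You instead exploit the strict cardinality drop ${\rm card}(\beta)<n$, the non-strict global bound $I_\mu(\mathbb{R}^q,\beta)\le I_\mu(\mathbb{R}^q,\alpha_n)$, and the strict monotonicity of $\hat e_n(\mu)$. Both routes work; yours is slightly cleaner in that it does not need to locate a region of strict improvement, while the paper's is self-contained in not invoking the external monotonicity result. One minor citation fix: for $r=0$ the strict monotonicity is \cite[Theorem~2.4]{GL:04} rather than \cite[Theorem~4.12]{GL:00}.
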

\begin{proof}
Assume that $L_c>n_0\phi_k$. Let $H_\sigma$ be as defined in Remark \ref{rem4} (c3). Set
\[
\beta:=\bigg(\alpha_n\cap \bigcup_{\sigma\in\Omega_k}(A_\sigma)_{\delta|A_\sigma|}\bigg)\cup\bigg(\bigcup_{\sigma\in\Omega_k}H_\sigma\bigg).
\]
Then ${\rm card}(\beta)\leq n$. For $x\in K_\mu\subset\bigcup_{\sigma\in\Omega_k}A_\sigma$, we have $d(x,\beta)\leq d(x,\alpha_n)$. We choose an arbitrary $x\in K_\mu  $ with $d(x,\alpha_n)=d(x,\alpha_n\setminus\bigcup_{\sigma\in\Omega_k}(A_\sigma)_{\delta|A_\sigma|})$. Then we have, $d(x,\alpha_n)>\delta|A_\sigma|$ for some $\sigma\in\Omega_k$. Thus, for every $y\in B_x:=B(x,4^{-1}\delta|A_\sigma|)$, we have $d(y,\alpha_n)\geq \frac{3}{4}\delta|A_\sigma|$. Hence,
\begin{eqnarray*}
I_\mu(\mathbb{R}^q,\alpha_n)-I_\mu(\mathbb{R}^q,\beta)&\geq& I_\mu(B_x,\alpha_n)-I_\mu(B_x,\beta)\\&\geq&\mu(B_x)\big(\log(\frac{3}{4}\delta|A_\sigma|)-\log(\frac{1}{4}\delta|A_\sigma|)\big)\\&=&\mu(B_x)\log 3>0.
\end{eqnarray*}
It follows that $I_\mu(\mathbb{R}^q,\alpha_n)>I_\mu(\mathbb{R}^q,\beta)$, contradicting the optimality of $\alpha_n$.
\end{proof}

For every $n\geq (n_0+n_2)\phi_{k_0}$, there exists a unique $k\geq k_0$, such that
\begin{equation}\label{g2}
(n_0+n_2)\phi_k\leq n<(n_0+n_2)\phi_{k+1}.
\end{equation}
Recall that $n_3=(n_0+n_2)N$. By \cite[Lemma 2.1]{Zhu:20}, we have
\begin{equation}\label{ncomp}
(n_0+n_2)\phi_k\leq n<(n_0+n_2)N\phi_k=n_3\phi_k.
\end{equation}

From now on, we assume that $n,k$ satisfy (\ref{g2}). We fix an $\alpha_n\in C_n(\mu)$ and an arbitrary VP $\{P_a(\alpha_n)\}_{a\in\alpha_n}$. We write $D_{\omega,i}$ for $D_{\omega,i}^{(\alpha_n)}$. We define
\[
L_\sigma:={\rm card}(\alpha_n\cap(A_\sigma)_{\delta|A_\sigma|}),\;\sigma\in\Omega_k.
\]

Using the subsequent two lemmas, we establish a lower bound for $L_\sigma$.
\begin{lemma}\label{key1}
Let $\omega\in\Omega_k$ and $B\in\{D_{\omega,i}\}_{i=1}^3\cup\{E_\omega\}$. Then
\[
I_\mu(B,\alpha_n)\geq \mu(B)(\log|B|+\hat{e}_{L_\omega+L_1}(\nu_B)).
\]
\end{lemma}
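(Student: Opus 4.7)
The plan is to find an auxiliary finite set $\beta\subset\mathbb{R}^q$ with $\mathrm{card}(\beta)\leq L_\omega+L_1$ that dominates $\alpha_n$ on $B$, in the sense that $d(x,\beta)\leq d(x,\alpha_n)$ for every $x\in B$. Once this is in place, integrating $\log$ yields $I_\mu(B,\alpha_n)\geq I_\mu(B,\beta)$; then Lemma \ref{l3} applied to $\beta$ gives
\[
I_\mu(B,\beta)\geq \mu(B)\log|B|+\mu(B)\hat{e}_{\mathrm{card}(\beta)}(\nu_B),
\]
and the monotonicity (non-increasing in $k$) of $\hat{e}_k(\nu_B)$ combined with $\mathrm{card}(\beta)\leq L_\omega+L_1$ closes the argument. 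The natural candidate is
\[
\beta:=\bigl(\alpha_n\cap (A_\omega)_{\delta|A_\omega|}\bigr)\cup\gamma_{E_\omega},
\]
where $\gamma_{E_\omega}$ is the $L_1$-point covering set from Remark \ref{rem4} (c1). Then $\mathrm{card}(\beta)\leq L_\omega+L_1$ by construction.

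The main technical step is to verify $d(x,\beta)\leq d(x,\alpha_n)$ for every $x\in B$. Since $B\subset D_{\omega,3}$ in all four cases for $B$, any $x\in B$ satisfies one of the following: either (i) $x\in W(a|\alpha_n)\cap A_\omega^*$ for some $a\in \alpha_n\cap (A_\omega)_{\delta|A_\omega|}$, or (ii) $x\in E_\omega$. Case (i) is immediate, since $d(x,\alpha_n)=d(x,a)$ and $a\in\beta$. For case (ii), choose $y\in\gamma_{E_\omega}\subset\beta$ with $d(x,y)\leq 2^{-1}\delta|E_\omega|$, which exists by Remark \ref{rem4} (c1). If $d(x,\alpha_n)\geq 2^{-1}\delta|E_\omega|$, then $d(x,\beta)\leq d(x,y)\leq d(x,\alpha_n)$. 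Otherwise, any $a\in\alpha_n$ realizing $d(x,a)=d(x,\alpha_n)<2^{-1}\delta|E_\omega|=\delta m^{-k}$ satisfies
\[
d(a,c_\omega)\leq d(a,x)+d(x,c_\omega)<\delta m^{-k}+m^{-k}=(1+\delta)m^{-k}\leq (2+4\delta)m^{-k},
\]
and therefore $a\in(A_\omega)_{\delta|A_\omega|}\cap\alpha_n\subset\beta$, giving $d(x,\beta)\leq d(x,a)=d(x,\alpha_n)$.

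The main obstacle is precisely the second sub-case of (ii), where one must exclude the possibility that a very close point of $\alpha_n$ lies outside $(A_\omega)_{\delta|A_\omega|}$; this is a purely geometric consequence of the fact that $|E_\omega|/|A_\omega|=1/2$ and $\delta$ was chosen small enough in \eqref{g3}. Everything else is routine: combining the inequality $I_\mu(B,\alpha_n)\geq I_\mu(B,\beta)$ with Lemma \ref{l3} and the monotonicity of $\hat{e}_k(\nu_B)$ yields exactly $I_\mu(B,\alpha_n)\geq \mu(B)(\log|B|+\hat{e}_{L_\omega+L_1}(\nu_B))$.
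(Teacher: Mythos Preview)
Your proposal is correct and follows essentially the same route as the paper: you build the same auxiliary set $\beta=(\alpha_n\cap(A_\omega)_{\delta|A_\omega|})\cup\gamma_{E_\omega}$ (called $\gamma(\omega)$ in the paper), verify that $d(x,\beta)\leq d(x,\alpha_n)$ on $B$, and then invoke Lemma~\ref{l3}. The only cosmetic difference is the case split: the paper partitions $B$ according to whether the nearest point of $\alpha_n$ lies in $(A_\omega)_{\delta|A_\omega|}$ (which immediately forces $B(2)\subset E_\omega$ and gives $d(x,\alpha_n)>\delta|A_\omega|$ there), whereas you split by the location of $x$ and then further by the size of $d(x,\alpha_n)$; both lead to the same conclusion.
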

\begin{proof}
We divide $B$ into two parts:
\[
B(1):=\{x\in B:d(x,\alpha_n)=d(x,\alpha_n\cap(A_\omega)_{\delta|A_\omega|)}\};\;B(2):=B\setminus B(1).
\]
By the definition, we have $B(2)\subset E_\omega$.  Let $\gamma_{E_\omega}$ be as defined in Remark \ref{rem4}. We define
$\gamma(\omega):=\big(\alpha_n\cap(A_\omega)_{\delta|A_\omega|}\big)\cup \gamma_{E_\omega}$.
Then ${\rm card}(\gamma(\omega))\leq L_\omega+L_1$ and clearly
$d(x,\alpha_n)\geq d(x,\gamma(\omega))$ for every $x\in B(1)$.
For $x\in B(2)$, we have
\[
d(x,\alpha_n))>\delta|A_\omega|=2\delta|E_\omega|>d(x,\gamma_{E_\omega})\geq d(x,\gamma(\omega)).
\]
Thus, for every $x\in B$, we have $d(x,\alpha_n)\geq d(x,\gamma(\omega))$. Thus, by Lemma \ref{l3},
\begin{eqnarray*}
I_\mu(B,\alpha_n)\geq I_\mu(B,\gamma(\omega))
\geq\mu(B)(\log|B|+\hat{e}_{L_\omega+L_1}(\nu_B)).
\end{eqnarray*}
This completes the proof of the lemma.
\end{proof}

Next, we give a lower bound for $\min\limits_{\sigma\in\Omega_k}L_\sigma$.
\begin{lemma}\label{lem7}
For every $\sigma\in\Omega_k$, we have $L_\sigma\geq n_1$.
\end{lemma}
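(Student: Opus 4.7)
The plan is to argue by contradiction. Suppose $L_\omega<n_1$ for some $\omega\in\Omega_k$; I will construct a finite set $\beta\subset\mathbb{R}^q$ with $|\beta|\leq n$ and $I_\mu(\mathbb{R}^q,\beta)<I_\mu(\mathbb{R}^q,\alpha_n)$, contradicting the $n$-optimality of $\alpha_n$.

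I will first fix $x_0\in A_\omega\cap K_\mu$ so that $B(x_0,\delta|A_\omega|)\cap\alpha_n=\emptyset$ (the existence of such $x_0$ is a measure-theoretic pigeonhole on $A_\omega$ using the Ahlfors--David bound (\ref{AD}) and the assumed smallness of $L_\omega$, and this is precisely where the particular size of $\delta$ chosen in (\ref{g3}) is needed). Next, pick $\gamma^*\in C_{L_\omega-L_0-L_2}(\nu_{D_{\omega,1}})$ and set
\[
\beta:=\bigl(\alpha_n\setminus(A_\omega)_{\delta|A_\omega|}\bigr)\cup B_\omega\cup G_{x_0}\cup f_{D_{\omega,1}}(\gamma^*).
\]
A direct count yields $|\beta|\leq n-L_\omega+L_0+L_2+(L_\omega-L_0-L_2)=n$.

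Then I will estimate $I_\mu(\mathbb{R}^q,\beta)-I_\mu(\mathbb{R}^q,\alpha_n)$ by decomposing $\mathbb{R}^q$ into the three disjoint pieces $\mathbb{R}^q\setminus F_{\omega,1}$, $D_{\omega,1}$, and $B(x_0,2^{-1}\delta|A_\omega|)$. On $\mathbb{R}^q\setminus F_{\omega,1}$, Lemma~\ref{compare2} (with $i=1$) gives $I_\mu(\beta)\leq I_\mu(\alpha_n)$. On $D_{\omega,1}$, Lemma~\ref{l3} applied to $\beta$ (equality, since $\gamma^*$ is optimal) combined with the lower bound from Lemma~\ref{key1} for $\alpha_n$ yields
\[
I_\mu(D_{\omega,1},\beta)-I_\mu(D_{\omega,1},\alpha_n)\leq\mu(D_{\omega,1})\bigl(\hat{e}_{L_\omega-L_0-L_2}(\nu_{D_{\omega,1}})-\hat{e}_{L_\omega+L_1}(\nu_{D_{\omega,1}})\bigr).
\]
On $B(x_0,2^{-1}\delta|A_\omega|)$, the covering property of $G_{x_0}$ forces $d(\cdot,\beta)\leq 4^{-1}\delta|A_\omega|$ there, while the choice of $x_0$ forces $d(\cdot,\alpha_n)\geq 2^{-1}\delta|A_\omega|$, producing a strict saving of at least $\mu(B(x_0,2^{-1}\delta|A_\omega|))\log 2$.

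Summing the three contributions and invoking the Ahlfors--David bounds of Lemma~\ref{lem2} and Remark~\ref{rem2} to compare $\mu(D_{\omega,1})$ with $\mu(B(x_0,2^{-1}\delta|A_\omega|))$, the calibration constant $\zeta^{-1}C_1C_2^{-1}\delta^{s_0}\log 2$ hard-wired into the definition of $n_1$ in Lemma~\ref{lem10} is precisely what is needed to guarantee that the saving on the small ball strictly dominates the loss on $D_{\omega,1}$ whenever $L_\omega<n_1$, forcing $I_\mu(\mathbb{R}^q,\beta)<I_\mu(\mathbb{R}^q,\alpha_n)$. The hard step is the very first one: producing a good $x_0\in A_\omega\cap K_\mu$ whose $\delta|A_\omega|$-neighbourhood avoids $\alpha_n$. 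This pigeonhole is exactly why $\delta$ was chosen in (\ref{g3}) so that $E_\omega\setminus B(x_0,2^{-1}\delta|A_\omega|)$ retains appreciable $\mu$-measure, as emphasized in the remark following (\ref{z7}).
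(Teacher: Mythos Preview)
Your argument has a genuine circularity: it is essentially the proof of Lemma~\ref{lem8} transplanted here, but that proof \emph{uses} Lemma~\ref{lem7}. Concretely, after bounding the loss on $D_{\omega,1}$ by
\[
\mu(D_{\omega,1})\bigl(\hat{e}_{L_\omega-L_0-L_2}(\nu_{D_{\omega,1}})-\hat{e}_{L_\omega+L_1}(\nu_{D_{\omega,1}})\bigr),
\]
you appeal to Lemma~\ref{lem10} to make this smaller than the gain on the small ball. But Lemma~\ref{lem10} only guarantees that bound for indices $\geq n_1$, whereas your hypothesis is precisely $L_\omega<n_1$; indeed $n_1$ is defined as the \emph{smallest} integer where the bound starts to hold, so at $L_\omega=n_1-1$ it can fail. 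Two further symptoms of the same problem: (i) $L_\omega-L_0-L_2$ may well be nonpositive, so $\gamma^*\in C_{L_\omega-L_0-L_2}(\nu_{D_{\omega,1}})$ need not even make sense; (ii) your pigeonhole for $x_0$ only shows $A_\omega\cap K_\mu$ is not covered by the $L_\omega$ balls of radius $\delta|A_\omega|$ when $L_\omega$ is below roughly $8^{s_0}$, not merely below $n_1$.

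The paper's proof avoids this trap by working at the \emph{other} end: if some $L_\sigma<n_1$, then by Lemma~\ref{lc} and counting there must exist a different $\omega\in\Omega_k\setminus\{\sigma\}$ with $L_\omega>n_2$. One then removes points from the overpopulated $(A_\omega)_{\delta|A_\omega|}$ and spends them on the underpopulated $E_\sigma$, using the decomposition $F_{\omega,2}=D_{\omega,2}\cup E_\sigma$. The loss on $D_{\omega,2}$ is now controlled at the large index $L_\omega>n_2$, where Lemma~\ref{lem6}\,(d1) applies, and the gain on $E_\sigma$ comes from Lemma~\ref{lem1} (strict decrease of $\hat e_k$). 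The moral: when a cell has too few points you cannot bootstrap locally; you must borrow from a cell that has too many.
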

\begin{proof}
Assume that $L_\sigma<n_1$ for some $\sigma\in\Omega_k$. We deduce a contradiction. By the assumption and Lemma \ref{lc}, we obtain
\[
\sum_{\tau\in\Omega_k\setminus\{\sigma\}}L_\tau>n-L_c-n_1\geq(n_2+n_0)\phi_k-n_0\phi_k-n_1>(\phi_k-1)n_2.
\]
Hence, there exists some $\omega\in\Omega_k\setminus\{\sigma\}$ such that $L_\omega>n_2$. We consider
\[
D_{\omega,2}=E_\omega\cup\bigg(\bigcup_{a\in\alpha_n\cap (A_\omega)_{\delta|A_\omega|}}(W(a|\alpha_n)\cap A_\omega^*)\bigg)\setminus E_\sigma.
\]
Note that it is possible that $E_\sigma\cap (A_\omega)_{\delta|A_\omega|}=\emptyset$. Let
\begin{eqnarray*}
&&\gamma_{L_\omega-L_0-n_1-L_1}(D_{\omega,2})\in C_{L_\omega-L_0-n_1-L_1}(\nu_{D_{\omega,2}}),\;\gamma_{n_1+L_1}(E_\sigma)\in C_{n_1+L_1}(\nu_{E_\sigma});
\\&&\beta:=\big(\alpha_n\setminus (A_\omega)_{\delta|A_\omega|}\big)\cup B_\omega\cup f_{D_{\omega,2}}(\gamma_{L_\omega-L_0-n_1-L_1}(D_{\omega,2}))\cup f_{E_\sigma}(\gamma_{n_1+L_1}(E_\sigma)).
\end{eqnarray*}
Then by applying Lemma \ref{compare2} to $F_{\omega,2}=D_{\omega,2}\cup E_\sigma$, we obtain
\begin{equation}\label{s4}
I_\mu(\mathbb{R}^q\setminus F_{\omega,2},\beta)\leq I_\mu(\mathbb{R}^q\setminus F_{\omega,2},\alpha_n).
\end{equation}
Next, we focus on the sets $D_{\omega,2}$ and $E_\sigma$. By the assumption, we have $L_\sigma\leq n_1-1$. Hence, by applying Lemmas \ref{l3} and \ref{key1} with $B=E_\sigma$, we deduce
\begin{eqnarray}
\Delta_{E_\sigma}:&=&I_\mu(E_\sigma,\alpha_n)-I_\mu(E_\sigma,\beta)\nonumber\\
&\geq&I_\mu(E_\sigma,\alpha_n)-I_\mu(E_\sigma,f_{E_\sigma}(\gamma_{n_1+L_1}(E_\sigma)))\nonumber\\
&\geq&\mu(E_\sigma)\big(\hat{e}_{n_1+L_1-1}(\nu_{E_\sigma})-\hat{e}_{n_1+L_1}(\nu_{E_\sigma})\big).
\end{eqnarray}
On the other hand, we apply Lemmas \ref{l3} and \ref{key1} with $B=D_{\omega,2}$, we have
\begin{eqnarray}
\Delta_{D_{\omega,2}}:&=&I_\mu(D_{\omega,2},\beta)-I_\mu(D_{\omega,2},\alpha_n)\nonumber\\
&\leq&I_\mu(D_{\omega,2},f_{D_{\omega,2}}(\gamma_{L_\omega-L_0-n_1-L_1}(D_{\omega,2})))-I_\mu(D_{\omega,2},\alpha_n)\nonumber\\
&\leq&\mu(D_{\omega,2})\big(\hat{e}_{L_\omega-L_0-n_1-L_1}(\nu_{D_{\omega,2}})-\hat{e}_{L_\omega+L_1}(\nu_{D_{\omega,2}})\big).
\end{eqnarray}
Note that $L_\omega>n_2$. By Lemma \ref{lem6} (d1) and Remark \ref{rem2}, we obtain that $\Delta_{E_\sigma}>\Delta_{D_{\omega,2}}$. This and (\ref{s4}), yield that $I_\mu(\mathbb{R}^q,\alpha_n)>I_\mu(\mathbb{R}^q,\beta)$, contradicting the optimality of $\alpha_n$.
\end{proof}

Our next lemma is very helpful for us to characterize the geometrical structure of an optimal VP.
\begin{lemma}\label{lem8}
For every $\sigma\in\Omega_k$, the following holds:
\[
\sup_{x\in A_\sigma\cap K_\mu}d(x,\alpha_n)\leq \delta|A_\sigma|.
\]
\end{lemma}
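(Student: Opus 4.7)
The plan is to argue by contradiction: suppose there exists $x_0\in A_\sigma\cap K_\mu$ with $d(x_0,\alpha_n)>\delta|A_\sigma|$, and I would construct a competitor $\beta$ with ${\rm card}(\beta)\leq n$ satisfying $I_\mu(\mathbb{R}^q,\beta)<I_\mu(\mathbb{R}^q,\alpha_n)$, violating the optimality of $\alpha_n$. Taking $\omega=\sigma$ in the definition of $D_{\omega,1}$ attached to this $x_0$, I note that $F_{\sigma,1}=D_{\sigma,1}\cup B(x_0,2^{-1}\delta|A_\sigma|)$ is a disjoint union contained in $A_\sigma^*$, and that $B(x_0,2^{-1}\delta|A_\sigma|)\subset (A_\sigma)_{\delta|A_\sigma|}$.

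By Lemma \ref{lem7} we have $L_\sigma\geq n_1$, and (after enlarging $n_1$ to exceed $L_0+L_2+1$ if needed) I would choose $\gamma\in C_{L_\sigma-L_0-L_2}(\nu_{D_{\sigma,1}})$ and set
\[
\beta:=\bigl(\alpha_n\setminus(A_\sigma)_{\delta|A_\sigma|}\bigr)\cup B_\sigma\cup G_{x_0}\cup f_{D_{\sigma,1}}(\gamma).
\]
A direct count yields ${\rm card}(\beta)\leq (n-L_\sigma)+L_0+L_2+(L_\sigma-L_0-L_2)=n$. Then I would compare $\alpha_n$ and $\beta$ on the three pieces $\mathbb{R}^q\setminus F_{\sigma,1}$, $B(x_0,2^{-1}\delta|A_\sigma|)$, and $D_{\sigma,1}$.

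On $\mathbb{R}^q\setminus F_{\sigma,1}$, Lemma \ref{compare2} delivers $I_\mu(\mathbb{R}^q\setminus F_{\sigma,1},\beta)\leq I_\mu(\mathbb{R}^q\setminus F_{\sigma,1},\alpha_n)$ for free. On $B(x_0,2^{-1}\delta|A_\sigma|)$, the triangle inequality forces $d(y,\alpha_n)>2^{-1}\delta|A_\sigma|$ while $d(y,G_{x_0})\leq 4^{-1}\delta|A_\sigma|$ by Remark \ref{rem4}(c2); integrating the resulting $\log 2$ pointwise drop against the Ahlfors-David lower bound $\mu(B(x_0,2^{-1}\delta|A_\sigma|))\geq C_1(2^{-1}\delta|A_\sigma|)^{s_0}$ produces a gain of at least $C_1\cdot 2^{-s_0}\delta^{s_0}|A_\sigma|^{s_0}\log 2$. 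On $D_{\sigma,1}$, Lemma \ref{key1} gives the lower bound $I_\mu(D_{\sigma,1},\alpha_n)\geq\mu(D_{\sigma,1})(\log|D_{\sigma,1}|+\hat{e}_{L_\sigma+L_1}(\nu_{D_{\sigma,1}}))$, while the equality case of Lemma \ref{l3} applied to the image of $\gamma$ bounds $I_\mu(D_{\sigma,1},\beta)$ above by $\mu(D_{\sigma,1})(\log|D_{\sigma,1}|+\hat{e}_{L_\sigma-L_0-L_2}(\nu_{D_{\sigma,1}}))$, so the loss on $D_{\sigma,1}$ is at most $\mu(D_{\sigma,1})\bigl[\hat{e}_{L_\sigma-L_0-L_2}(\nu_{D_{\sigma,1}})-\hat{e}_{L_\sigma+L_1}(\nu_{D_{\sigma,1}})\bigr]$.

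To close the contradiction, I would invoke the analog of Lemma \ref{lem10} for $\nu_{D_{\sigma,1}}$, bounding this entropy gap by $\zeta^{-1}C_1C_2^{-1}\delta^{s_0}\log 2$; combined with $\mu(D_{\sigma,1})\leq\zeta\mu(E_\sigma)\leq\zeta C_2(|A_\sigma|/4)^{s_0}$ from Remark \ref{rem2} and the Ahlfors-David upper bound, the loss is at most $C_1\cdot 4^{-s_0}\delta^{s_0}|A_\sigma|^{s_0}\log 2$, which is strictly smaller than the gain since $2^{-s_0}>4^{-s_0}$. Summing the three pieces yields $I_\mu(\mathbb{R}^q,\beta)<I_\mu(\mathbb{R}^q,\alpha_n)$, the desired contradiction. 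The main subtlety is justifying that Lemma \ref{lem10} applies to $\nu_{D_{\sigma,1}}$ and not merely to $\nu_{D_{\omega,2}}$; this is handled by the uniform H\"older bound of Lemma \ref{lem2} together with \cite[Lemma 5.8]{GL:04}, which lets one enlarge $n_1$ so that the required entropy decrement inequality holds simultaneously for every auxiliary measure $\nu_B$ with $B\in\{D_{\omega,i}\}_{i=1}^4\cup\{E_\omega\}$.
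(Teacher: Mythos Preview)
Your proposal is correct and follows essentially the same approach as the paper: the same competitor $\beta=(\alpha_n\setminus(A_\sigma)_{\delta|A_\sigma|})\cup B_\sigma\cup G_{x_0}\cup f_{D_{\sigma,1}}(\gamma)$, the same three-piece decomposition via Lemma~\ref{compare2}, and the same gain/loss comparison using Lemmas~\ref{l3}, \ref{key1}, \ref{lem7}, and \ref{lem10}. Your final numerics are phrased in terms of $|A_\sigma|^{s_0}$ rather than $\mu(A_\sigma)$, and you make explicit (correctly) the point that Lemma~\ref{lem10} must be read as holding uniformly for all auxiliary measures $\nu_B$ via Lemma~\ref{lem2} and \cite[Lemma 5.8]{GL:04}; the paper invokes this implicitly.
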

\begin{proof}
Assume that for some $\sigma\in\Omega_k$ and some $x_0\in A_\sigma\cap K_\mu$, we have, $d(x_0,\alpha_n)>\delta|A_\sigma|$. We will deduce a contradiction. By the assumption, for $x\in B(x_0,2^{-1}\delta|A_\sigma|)$, we have $d(x,\alpha_n)>2^{-1}\delta|A_\sigma|$. Let $G_{x_0}$ be as defined in Remark \ref{rem4} (c2). We consider
\[
D_{\sigma,1}=E_\sigma\cup\bigg(\bigcup_{b\in\alpha_n\cap(A_\sigma)_{\delta|A_\sigma|}}(W_b(\alpha_n)\cap A_\sigma^*)\bigg)\setminus B(x_0,2^{-1}\delta|A_\sigma|).
\]
Let $\gamma_{L_\sigma-L_0-L_2}(D_{\sigma,1})\in C_{L_\sigma-L_0-L_2}(\nu_{D_{\sigma,1}})$. We define
\begin{eqnarray*}
\gamma:=\big(\alpha_n\setminus(A_\sigma)_{\delta|A_\sigma|}\big)\cup G_{x_0}\cup B_\sigma\cup f_{D_{\sigma,1}}(\gamma_{L_\sigma-L_0-L_2}(D_{\sigma,1})).
\end{eqnarray*}
Then by applying Lemma \ref{compare2} to $F_{\sigma,1}=D_{\sigma,1}\cup B(x_0,2^{-1}\delta|A_\sigma|)$, we obtain
\begin{equation}\label{s1}
I_\mu(\mathbb{R}^q\setminus F_{\sigma,1},\gamma)\leq I_\mu(\mathbb{R}^q\setminus F_{\sigma,1},\alpha_n).
\end{equation}
For every $x\in B(x_0,2^{-1}\delta|A_\sigma|)$, we have $d(x,\gamma)\leq 4^{-1}\delta|A_\sigma|$. It follows that
\begin{eqnarray}\label{s2}
\Delta_{x_0}:&=&I_\mu(B(x_0,2^{-1}\delta|A_\sigma|),\alpha_n)-I_\mu(B(x_0,2^{-1}\delta|A_\sigma|),\gamma)\nonumber\\&\geq&\mu(B(x_0,2^{-1}\delta|A_\sigma|))\log 2\nonumber\\&\geq&C_1(2^{-1}\delta|A_\sigma|)^{s_0}\log 2\nonumber\\&\geq& C_1C_2^{-1} \delta^{s_0}\mu(A_\sigma)\log2.
\end{eqnarray}
By applying Lemmas \ref{l3} and \ref{key1} with $B=D_{\sigma,1}$, we have
\begin{eqnarray}\label{s3}
\Delta_{D_{\sigma,1}}:&=&I_\mu(D_{\sigma,1},\gamma)-I_\mu(D_{\sigma,1},\alpha_n)
\nonumber\\&\leq&I_\mu(D_{\sigma,1},f_{D_{\sigma,1}}(\gamma_{L_\sigma-L_0-L_2}(D_{\sigma,1})))-I_\mu(D_{\sigma,1},\alpha_n)\nonumber\\&\leq&
\mu(D_{\sigma,1})\big(\hat{e}_{L_\sigma-L_0-L_2})(\nu_{D_{\sigma,1}})-\hat{e}_{L_\sigma+L_1}(\nu_{D_{\sigma,1}})\big).
\end{eqnarray}
From Lemma \ref{lem7}, we know that $L_\sigma\geq n_1$. Thus, by Lemmas \ref{lem2}, \ref{lem10} and Remark \ref{rem2}, we obtain that $\Delta_{x_0}>\Delta_{D_{\sigma,1}}$. This and (\ref{s1}) imply that $I_\mu(\mathbb{R}^q,\gamma)<I_\mu(\mathbb{R}^q,\alpha_n)$, which contradicts the optimality of $\alpha_n$.
\end{proof}

\begin{remark}\label{rem5}
By Lemma \ref{lem8}, we obtain that, whenever $n\geq(n_0+n_2)\phi_k$, we have
$L_c={\rm card}(\alpha_n\setminus\bigcup_{\sigma\in\Omega_k}(A_\sigma)_{\delta|A_\sigma|})=0$. Therefore, we have
\[
\alpha_n\subset\bigcup_{\sigma\in\Omega_k}(A_\sigma)_{\delta|A_\sigma|}.
\]
\end{remark}

\begin{lemma}\label{lemma1}
Let $\emptyset\neq\beta\subset\mathbb{R}^q$ be a finite set and $l_\beta(\omega):={\rm card}(\beta\cap E_\omega)$ for $\omega\in\Omega_k$. Then the following estimate holds:
\[
I_\mu(D_{\omega,4},\beta)\geq\mu(D_{\omega,4})(\log|D_{\omega,4}|+\hat{e}_{l_\beta(\omega)+L_1}).
\]
\end{lemma}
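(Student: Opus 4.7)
The plan is to reduce the claim to Lemma \ref{l3} by constructing a small auxiliary finite set that, on $D_{\omega,4}$, is pointwise at least as close as $\beta$. Set
\[
\gamma(\omega) := (\beta \cap E_\omega) \cup \gamma_{E_\omega},
\]
where $\gamma_{E_\omega}$ is the $L_1$-point subset of $E_\omega$ from Remark \ref{rem4}(c1) whose $2^{-1}\delta|E_\omega|$-balls cover $E_\omega$. Then $\mathrm{card}(\gamma(\omega)) \leq l_\beta(\omega) + L_1$.

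The crux is the pointwise inequality $d(x,\beta) \geq d(x,\gamma(\omega))$ for every $x \in D_{\omega,4}$, and I would split into two cases according to where a nearest point of $\beta$ to $x$ lies. If $d(x,\beta) = d(x, \beta \cap E_\omega)$, the inequality is immediate since $\beta \cap E_\omega \subset \gamma(\omega)$. Otherwise $d(x,\beta) = d(x, \beta \setminus E_\omega)$, so $d(x,\beta)$ is at least the distance from $x$ to the complement of $E_\omega$. Using that $D_{\omega,4} = B(c_\omega, (2^{-1}-\delta)|E_\omega|)$ sits inside the radius-$2^{-1}|E_\omega|$ ball $E_\omega$ with margin $\delta|E_\omega|$, this yields $d(x,\beta) \geq \delta|E_\omega|$. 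Meanwhile, since $x \in D_{\omega,4} \subset E_\omega$ is covered by the $2^{-1}\delta|E_\omega|$-balls centered at $\gamma_{E_\omega}$, there exists $c \in \gamma_{E_\omega}$ with $d(x,c) \leq 2^{-1}\delta|E_\omega| \leq d(x,\beta)$, so $d(x,\gamma(\omega)) \leq d(x,\beta)$.

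Once this pointwise comparison is in hand, applying the (monotone) logarithm and integrating against $\mu$ over $D_{\omega,4}$ gives
\[
I_\mu(D_{\omega,4},\beta) \geq I_\mu(D_{\omega,4},\gamma(\omega)).
\]
Then Lemma \ref{l3} with $B = D_{\omega,4}$ and $\alpha = \gamma(\omega)$ yields
\[
I_\mu(D_{\omega,4},\gamma(\omega)) \geq \mu(D_{\omega,4})\bigl(\log|D_{\omega,4}| + \hat{e}_{\mathrm{card}(\gamma(\omega))}(\nu_{D_{\omega,4}})\bigr).
\]
Since $\hat{e}_n(\nu_{D_{\omega,4}})$ is nonincreasing in $n$ and $\mathrm{card}(\gamma(\omega)) \leq l_\beta(\omega) + L_1$, we obtain $\hat{e}_{\mathrm{card}(\gamma(\omega))}(\nu_{D_{\omega,4}}) \geq \hat{e}_{l_\beta(\omega) + L_1}(\nu_{D_{\omega,4}})$, which gives the announced estimate.

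There is no real obstacle here: the construction of $\gamma(\omega)$ is forced by the geometry, and the only part requiring care is the two-case distance comparison, which is the point where the specific choice of the shrunken ball $D_{\omega,4}$ (and of the constant $\delta$) is used. Everything else is a direct invocation of Lemma \ref{l3} and the monotonicity of $\hat{e}_n$.
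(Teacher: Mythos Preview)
Your proposal is correct and follows essentially the same approach as the paper: the paper constructs the identical auxiliary set $\gamma(\omega)=(\beta\cap E_\omega)\cup\gamma_{E_\omega}$, performs the same two-case pointwise comparison on $D_{\omega,4}$, and then invokes Lemma~\ref{l3}. You have simply made explicit the margin argument (that $D_{\omega,4}$ sits $\delta|E_\omega|$ deep inside $E_\omega$) and the monotonicity of $\hat{e}_n$, both of which the paper leaves implicit.
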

\begin{proof}
Let $\gamma_{E_\omega}$ be as defined in Remark \ref{rem4} (c1). Since $D_{\omega,4}\subset E_\omega$, we have $d(x,\gamma_{E_\omega})\leq2^{-1}\delta|E_\omega|$ for every $x\in D_{\omega,4}$. We define
\[
\gamma(\omega):=(\beta\cap E_\omega)\cup\gamma_{E_\omega}.
 \]
Then ${\rm card}(\gamma(\omega))\leq l_\beta(\omega)+L_1$. Let $x\in D_{\omega,4}$. If $d(x,\beta)=d(x,\beta\cap E_\omega)$, then it is clear that $d(x,\gamma(\omega))\leq d(x,\beta)$. Otherwise, we have
\[
d(x,\beta)=d(x,\beta\setminus E_\omega)\geq\delta|E_\omega|>d(x,\gamma_{E_\omega})\geq d(x,\gamma).
\]
Thus, $I_\mu(D_{\omega,4},\beta)\geq I_\mu(D_{\omega,4},\gamma(\omega))$. The lemma follows by Lemma \ref{l3}.
\end{proof}

Now we are able to give an upper bound for $\max_{\sigma\in\Omega_k}L_\sigma$.
\begin{lemma}\label{lem4}
For every $\sigma\in\Omega_k$, we have $L_\sigma\leq n_4$.
\end{lemma}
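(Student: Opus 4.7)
The plan is to argue by contradiction along the lines of Lemma \ref{lem7}, but with ``source'' and ``receiver'' roles reversed. Suppose $L_\sigma>n_4$ for some $\sigma\in\Omega_k$. I will remove all $L_\sigma$ points of $\alpha_n\cap(A_\sigma)_{\delta|A_\sigma|}$ and rebuild via $B_\sigma$ (the $L_0$ covering points) together with an optimal set for $\nu_{D_{\sigma,3}}$ of cardinality $L_\sigma-L_0-n_3-L_1$; the $n_3+L_1$ ``freed'' points will then be placed as an optimal set for $\nu_{D_{\omega,4}}$ in some $\omega\in\Omega_k\setminus\mathcal{A}_\sigma$ to be identified below. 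The resulting competitor
\[
\beta:=(\alpha_n\setminus(A_\sigma)_{\delta|A_\sigma|})\cup B_\sigma\cup f_{D_{\sigma,3}}\bigl(\gamma_{L_\sigma-L_0-n_3-L_1}(D_{\sigma,3})\bigr)\cup f_{D_{\omega,4}}\bigl(\gamma_{n_3+L_1}(D_{\omega,4})\bigr)
\]
satisfies ${\rm card}(\beta)\leq n$, and positivity of $L_\sigma-L_0-n_3-L_1$ follows from $L_\sigma>n_4>M_0n_3+L_0+L_1$.

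The first and most delicate step will be to locate $\omega$. I need $\omega\in\Omega_k\setminus\mathcal{A}_\sigma$ with $l_{\alpha_n}(\omega):={\rm card}(\alpha_n\cap E_\omega)\leq n_3-1$. The naive estimate $\sum_\omega l_{\alpha_n}(\omega)\leq n<n_3\phi_k$ from (\ref{ncomp}) only guarantees such an $\omega$ somewhere, not outside $\mathcal{A}_\sigma$, and this is the main obstacle. The extra ingredient I would use is that for $\omega\notin\mathcal{A}_\sigma$ one has $(A_\omega)_{2\delta|A_\omega|}\cap(A_\sigma)_{2\delta|A_\sigma|}=\emptyset$, which forces $E_\omega\cap(A_\sigma)_{\delta|A_\sigma|}=\emptyset$; hence the $L_\sigma$ points in $(A_\sigma)_{\delta|A_\sigma|}$ and the points counted by $\sum_{\omega\notin\mathcal{A}_\sigma}l_{\alpha_n}(\omega)$ are pairwise disjoint subsets of $\alpha_n$. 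If no suitable $\omega$ existed, I would obtain $(\phi_k-M_0)n_3+L_\sigma\leq n<n_3\phi_k$, forcing $L_\sigma<M_0 n_3<n_4$ and contradicting the hypothesis $L_\sigma>n_4$.

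With $\omega$ fixed, Lemma \ref{compare2} applied with $i=3$ and $\tau=\omega$ (permitted since $\omega\notin\mathcal{A}_\sigma$) yields $I_\mu(\mathbb{R}^q\setminus F_{\sigma,3},\beta)\leq I_\mu(\mathbb{R}^q\setminus F_{\sigma,3},\alpha_n)$ with $F_{\sigma,3}=D_{\sigma,3}\cup D_{\omega,4}$, so the comparison reduces to two local pieces. Lemmas \ref{l3} and \ref{key1} on $D_{\sigma,3}$ give
\[
I_\mu(D_{\sigma,3},\beta)-I_\mu(D_{\sigma,3},\alpha_n)\leq\mu(D_{\sigma,3})\bigl(\hat{e}_{L_\sigma-L_0-n_3-L_1}(\nu_{D_{\sigma,3}})-\hat{e}_{L_\sigma+L_1}(\nu_{D_{\sigma,3}})\bigr),
\]
and Lemmas \ref{l3} and \ref{lemma1} on $D_{\omega,4}$, together with $l_{\alpha_n}(\omega)+L_1\leq n_3+L_1-1$, give
\[
I_\mu(D_{\omega,4},\alpha_n)-I_\mu(D_{\omega,4},\beta)\geq\mu(D_{\omega,4})\bigl(\hat{e}_{n_3+L_1-1}(\nu_{D_{\omega,4}})-\hat{e}_{n_3+L_1}(\nu_{D_{\omega,4}})\bigr).
\]

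To close the argument, since $L_\sigma\geq n_4$, Lemma \ref{lem6}(d2) bounds the first parenthetical by $\zeta^{-1}$ times the second, and Remark \ref{rem2} provides $\mu(D_{\sigma,3})\leq\zeta\mu(D_{\omega,4})$. Multiplying, the $\zeta$ factors cancel and the two displayed estimates combine into a strict inequality $I_\mu(D_{\sigma,3},\beta)-I_\mu(D_{\sigma,3},\alpha_n)<I_\mu(D_{\omega,4},\alpha_n)-I_\mu(D_{\omega,4},\beta)$, whence $I_\mu(\mathbb{R}^q,\beta)<I_\mu(\mathbb{R}^q,\alpha_n)$, contradicting the optimality of $\alpha_n$.
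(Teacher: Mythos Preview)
Your approach is the same as the paper's, and the counting argument you give to locate $\omega\in\Omega_k\setminus\mathcal{A}_\sigma$ with ${\rm card}(\alpha_n\cap E_\omega)<n_3$ is correct and equivalent to the paper's pigeonhole (the paper phrases it as ${\rm card}(\alpha_n\setminus(A_\sigma)_{\delta|A_\sigma|})<(\phi_k-M_0)n_3$, which is your inequality rearranged). The competitor $\beta$, the use of Lemma~\ref{compare2}, and the two local estimates via Lemmas~\ref{l3}, \ref{key1}, \ref{lemma1} together with Lemma~\ref{lem6}(d2) and Remark~\ref{rem2} all match the paper.

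There is one genuine step you have skipped. When you write ``the comparison reduces to two local pieces,'' you are implicitly assuming that $D_{\sigma,3}$ and $D_{\omega,4}$ are disjoint (so that $I_\mu(F_{\sigma,3},\cdot)=I_\mu(D_{\sigma,3},\cdot)+I_\mu(D_{\omega,4},\cdot)$ and the final inequality assembles). This is \emph{not} a consequence of $\omega\notin\mathcal{A}_\sigma$ alone: although $E_\omega\cap(A_\sigma)_{\delta|A_\sigma|}=\emptyset$, the set $D_{\sigma,3}$ lives in $A_\sigma^*=\bigcup_{\tau\in\mathcal{A}_\sigma}A_\tau$, and $E_\omega$ may well meet some $A_\tau$ with $\tau\in\mathcal{A}_\sigma$. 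The paper closes this gap by invoking Lemma~\ref{lem8}: for $x\in E_\omega$ one has $d(x,\alpha_n)\leq\tfrac{1}{2}|E_\omega|+\delta|A_\omega|<d(x,(A_\sigma)_{\delta|A_\sigma|})$, so no point of $E_\omega$ lies in $W(a|\alpha_n)$ for any $a\in\alpha_n\cap(A_\sigma)_{\delta|A_\sigma|}$; combined with $E_\omega\cap E_\sigma=\emptyset$ this gives $E_\omega\cap D_{\sigma,3}=\emptyset$, hence $D_{\omega,4}\cap D_{\sigma,3}=\emptyset$. You should insert this step (and cite Lemma~\ref{lem8}) before splitting the integral over $F_{\sigma,3}$.
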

\begin{proof}
Assume that, for some $\sigma\in\Omega_k$, we have $L_\sigma>n_4(>M_0n_3)$. Next, we deduce a contradiction. By the assumption and (\ref{ncomp}), we deduce
\[
{\rm card}(\alpha_n\setminus (A_\sigma)_{\delta|A_\sigma|})\leq n-n_4<n_3\phi_k-M_0n_3\leq(\phi_k-M_0)n_3.
\]
 By Lemma \ref{lem9}, we have ${\rm card}(\Omega_k\setminus\mathcal{A}_\sigma)\geq \phi_k-M_0$. Note that $E_\rho,\rho\in\Omega_k$, are pairwise disjoint. There exists some $\omega\in\Omega_k\setminus\mathcal{A}_\sigma$ such that
${\rm card}(\alpha_n\cap E_\omega)<n_3$. We consider
\[
D_{\sigma,3}=E_\sigma\cup\bigg(\bigcup_{a\in\alpha_n\cap(A_\sigma)_{\delta|A_\sigma|}}(W_a(\alpha_n)\cap A_\sigma^*)\bigg).
\]
Using Lemma \ref{lem8} and the triangle inequality, for every $x\in E_\omega$, we have
\begin{eqnarray*}
d(x,\alpha_n)\leq \frac{1}{2}|E_\omega|+\delta|A_\omega|<d(x,(A_\sigma)_{\delta|A_\sigma|}).
\end{eqnarray*}
It follows that $E_\omega\cap D_{\sigma,3}=\emptyset$. We define
\begin{eqnarray*}
&&\gamma_{n_3+L_1}(D_{\omega,4})\in C_{n_3+L_1}(\nu_{D_{\omega,4}}),\;\gamma_{L_\sigma-L_0-n_3-L_1}(D_{\sigma,3})\in C_{L_\sigma-L_0-n_3-L_1}(\nu_{D_{\sigma,3}});\\
&&\beta:=\big(\alpha_n\setminus(A_\sigma)_{\delta|A_\sigma|}\big)\cup B_\sigma\cup f_{D_{\sigma,3}}(\gamma_{L_\sigma-L_0-n_3-L_1}(D_{\sigma,3}))\cup f_{D_{\omega,4}}(\gamma_{n_3+L_1}(D_{\omega,4})).
\end{eqnarray*}
Then ${\rm card}(\beta)\leq n$. By applying Lemma \ref{compare2} to $F_{\sigma,3}=D_{\sigma,3}\cup D_{\omega,4}$, we obtain
 \begin{eqnarray}\label{s6}
I_\mu(\mathbb{R}^q\setminus F_{\sigma,3},\beta)\leq I_\mu(\mathbb{R}^q\setminus F_{\sigma,3},\alpha_n).
\end{eqnarray}
This allows us to focus on integrals over the sets $D_{\sigma,3}$ and $D_{\omega,4}$. Note that for every $x\in D_{\omega,4}$, we have $d(x,\beta)\leq d(x,\gamma_{n_3+L_1}(D_{\omega,4}))$. Applying Lemma \ref{l3} with $B=D_{\omega,4}$  and Lemma \ref{lemma1} , we have
\begin{eqnarray*}
\Delta_{D_{\omega,4}}:&=&I_\mu(D_{\omega,4},\alpha_n)-I_\mu(D_{\omega,4},\beta)\nonumber\\
&\geq&I_\mu(D_{\omega,4},\alpha_n)-I_\mu(D_{\omega,4},f_{D_{\omega,4}}(\gamma_{n_3+L_1}(D_{\omega,4})))\nonumber
\\&=&\mu(D_{\omega,4})\big(\hat{e}_{n_3-1+L_1}(\nu_{D_{\omega,4}})-\hat{e}_{n_3+L_1}(\nu_{D_{\omega,4}})\big).
\end{eqnarray*}
Similarly, for every $x\in D_{\sigma,3}$, we have $d(x,\beta)\leq d(x,\gamma_{L_\sigma-L_0-n_3-L_1}(D_{\sigma,3}))$. Thus, we apply Lemmas \ref{l3} and \ref{key1} with $B=D_{\sigma,3}$ and obtain
\begin{eqnarray*}
\Delta_{D_{\sigma,3}}:&=&I_\mu(D_{\sigma,3},\beta)-I_\mu(D_{\sigma,3},\alpha_n)\nonumber\\
&\leq&I_\mu(D_{\sigma,3},f_{D_{\sigma,3}}(\gamma_{L_\sigma-L_0-n_3-L_1}(D_{\sigma,3})))-I_\mu(D_{\sigma,3},\alpha_n)\nonumber
\\&=&\mu(D_{\sigma,3})\big(\hat{e}_{L_\sigma-L_0-n_3-L_1}(\nu_{D_{\sigma,3}})-\hat{e}_{L_\sigma+L_1}(\nu_{D_{\sigma,3}})\big).
\end{eqnarray*}
By the assumption, we have $L_\sigma>n_4$. Thus, from Lemmas \ref{lem2}, \ref{lem6} (d2) and Remark \ref{rem2}, we deduce that $\Delta_{D_{\omega,4}}>\Delta_{D_{\sigma,3}}$. Combining this with
(\ref{s6}), we obtain that $I_\mu(\mathbb{R}^q,\beta)<I_\mu(\mathbb{R}^q,\alpha_n)$, contradicting the optimality of $\alpha_n$.
\end{proof}

\section{Proof of Theorem \ref{mthm} }

Let $a\in\alpha_n$. By Remark \ref{rem5}, we have $a\in (A_\sigma)_{\delta|A_\sigma|}$ for some $\sigma\in\Omega_k$. Fix an arbitrary word $\tau_0\in\mathcal{A}_\sigma$. We define
\begin{eqnarray*}
&&\Gamma(\tau):=\alpha_n\cap (A_\tau)_{\delta|A_\tau|},\; \tau\in\mathcal{A}_\sigma;\\
&&G(a):=A_{\tau_0}\cup\bigcup_{\tau\in\mathcal{A}_\sigma}\bigcup_{b\in\Gamma(\tau)}(P_b(\alpha_n)\cap K_\mu);\\&&H(a):=\bigcup_{\tau\in\mathcal{A}_\sigma}\Gamma(\tau);\;T_a:={\rm card}(H(a)).
\end{eqnarray*}

Let $f_a$ be a similarity mapping of similarity ratio $|G(a)|$. We define
\[
\nu_{G(a)}=\mu(\cdot|G(a))\circ f_a=\mu\bigg(\cdot\bigg|\bigcup_{\tau\in\mathcal{A}_\sigma}\bigcup_{b\in\Gamma(\tau)}(P_b(\alpha_n)\cap K_\mu)\bigg)\circ f_a.
\]
\begin{lemma}\label{lem3}
Let $G(a)$ and $\nu_{G(a)}$ be as defined above. Then we have
\begin{enumerate}
\item[(i)]$P_a(\alpha_n)\cap K_\mu\subset G(a)$ and $n_1\leq T_a\leq M_0n_4=:n_5$;
\item[(ii)] there exists some constant $C$, such that
\[
\sup_{x\in\mathbb{R}^q}\nu_{G(a)}(B(x,\epsilon))\leq C\epsilon^{s_0}\;{\rm for\; every}\;\epsilon>0.
\]
\end{enumerate}
\end{lemma}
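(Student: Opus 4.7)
The plan is to treat (i) and (ii) in turn, both of which reduce to combining the characterization results of Section 5 with the basic geometric estimates of Section 2.

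For (i), the containment $P_a(\alpha_n)\cap K_\mu\subset G(a)$ follows by directly unraveling definitions. Since $a\in(A_\sigma)_{\delta|A_\sigma|}$ we have $a\in\Gamma(\sigma)$, and because $\sigma\in\mathcal{A}_\sigma$, the set $P_a(\alpha_n)\cap K_\mu$ appears as one of the summands in the union defining $G(a)$. The lower bound $T_a\geq n_1$ then comes from $\Gamma(\sigma)\subset H(a)$ together with Lemma \ref{lem7}. For the upper bound, I would use
\[
T_a={\rm card}\Bigl(\bigcup_{\tau\in\mathcal{A}_\sigma}\Gamma(\tau)\Bigr)\leq\sum_{\tau\in\mathcal{A}_\sigma}L_\tau\leq M_\sigma\max_{\tau\in\mathcal{A}_\sigma}L_\tau,
\]
and apply Lemma \ref{lem9}(a1) together with Lemma \ref{lem4} to obtain $T_a\leq M_0n_4=n_5$.

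For (ii), the strategy is to produce a universal upper bound for the ratio $|G(a)|^{s_0}/\mu(G(a))$, after which the conclusion follows from the global Ahlfors-David upper bound assumed after (\ref{AD}). Indeed, the definition of $\nu_{G(a)}$ through the similarity $f_a$ of ratio $|G(a)|$ yields
\[
\nu_{G(a)}(B(x,\epsilon))=\frac{\mu(f_a(B(x,\epsilon))\cap G(a))}{\mu(G(a))}\leq\frac{C_2(|G(a)|\epsilon)^{s_0}}{\mu(G(a))},
\]
so uniform control over $|G(a)|^{s_0}/\mu(G(a))$ delivers the desired $C\epsilon^{s_0}$ estimate. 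To bound $|G(a)|$ from above I would confine $G(a)$ to a ball around $c_\sigma$ of radius comparable to $m^{-k}$. Three pieces enter: the defining condition $\tau\in\mathcal{A}_\sigma$ forces $|c_\sigma-c_\tau|=O(m^{-k})$ via the triangle inequality applied to the intersecting $2\delta$-neighborhoods; each $b\in\Gamma(\tau)$ lies within $\delta|A_\tau|$ of $c_\tau$; and, most importantly, every $x\in P_b(\alpha_n)\cap K_\mu$ sits in some $A_\rho\cap K_\mu$, where Lemma \ref{lem8} gives $d(x,b)=d(x,\alpha_n)\leq\delta|A_\rho|=O(m^{-k})$. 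Summing the three bounds yields $G(a)\subset B(c_\sigma,Rm^{-k})$ for a constant $R=R(\delta)$. The matching lower bound on $\mu(G(a))$ comes from $A_{\tau_0}\subset G(a)$ and Lemma \ref{lem5}, giving $\mu(G(a))\geq\eta_1\phi_k^{-1}$; since the Ahlfors-David property implies $\phi_k^{-1}$ is comparable to $m^{-ks_0}$, the ratio $|G(a)|^{s_0}/\mu(G(a))$ is uniformly bounded, yielding the desired universal constant $C$.

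The main obstacle is the geometric containment used for $|G(a)|$. A generic Voronoi region $P_b(\alpha_n)$ can be unbounded in $\mathbb{R}^q$, so it is crucial that we intersect with the compact support $K_\mu$ and invoke Lemma \ref{lem8} to force each $P_b(\alpha_n)\cap K_\mu$ to sit near $b$. Once that geometric fact is established, the remainder is routine bookkeeping with the Ahlfors-David inequalities.
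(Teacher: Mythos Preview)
Your proposal is correct and follows essentially the same approach as the paper: part (i) uses Lemmas~\ref{lem9}, \ref{lem7}, and \ref{lem4} exactly as the paper does, and part (ii) bounds $|G(a)|$ via Lemma~\ref{lem8} and $\mu(G(a))$ from below via $A_{\tau_0}\subset G(a)$, just as in the paper. The only cosmetic difference is that you carry out the final step of (ii) directly (bounding $|G(a)|^{s_0}/\mu(G(a))$), whereas the paper packages this by citing \cite[Lemma~2.5]{Zhu:20}; your route through $\phi_k$ and Lemma~\ref{lem5} for the lower bound is equivalent to the paper's direct Ahlfors--David estimate $\mu(G(a))\geq\mu(A_{\tau_0})\geq C_1 2^{-s_0}|A_{\tau_0}|^{s_0}$.
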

\begin{proof}
The first part of (i) is an easy consequence of the definition of $G(a)$. By Lemma \ref{lem9}, ${\rm card}(\mathcal{A}_\sigma)\leq M_0$. Further, for every $\tau\in\mathcal{A}_\sigma$, by Lemmas \ref{lem7} and \ref{lem4}, we have, $n_1\leq{\rm card}(\Gamma(\tau))\leq n_4$ for every $\tau\in\mathcal{A}_\sigma$. Hence, $n_1\leq T_a\leq n_5$.

Next, we show (ii). By the definitions of $G(a),\mathcal{A}_\rho$ and  $A_\rho,\rho\in\Omega_k$ and Lemma \ref{lem8}, we have
\begin{equation}\label{g1}
A_{\tau_0}\subset G(a)\subset\bigcup_{\tau\in\mathcal{A}_\sigma}\bigcup_{\rho\in\mathcal{A}_\tau}A_\rho\subset B\big(c_\sigma,(8\delta+\frac{5}{2})|A_{\tau_0}|\big)
\end{equation}
Thus, we have the following estimate:
\begin{equation}\label{z4}
|A_{\tau_0}|\leq|G(a)|\leq (5+16\delta)|A_{\tau_0}|.
\end{equation}
Let $\eta_3:= C_2(3+8\delta)^{s_0}$ and $\eta_4:=C_12^{-s_0}$. By (\ref{g1}), (\ref{z4}) and (\ref{AD}),
\begin{eqnarray}\label{compare1}
&&\mu(G(a))\leq C_2(3+8\delta)^{s_0}|A_{\tau_0}|^{s_0}\leq\eta_3|G(a)|^{s_0};\label{compare11}\\
&&\mu(G(a))\geq C_12^{-s_0}|A_{\tau_0}|^{s_0}\geq \eta_4(5+16\delta)^{-s_0}|G(a)|^{s_0}.\nonumber
\end{eqnarray}
Thus, from \cite[Lemma 2.5]{Zhu:20}, we obtain (ii).
\end{proof}

\emph{Proof of Theorem \ref{mthm}}

By (\ref{AD}), Lemmas \ref{lem11} and \ref{l1}, it is sufficient to consider $n\geq(n_0+n_2)\phi_{k_0}$.
Let $a\in\alpha_n$ and let $G(a),H(a),\nu_{G(a)}$ be as defined above. By Theorem 2.4 and Lemma 2.3 of \cite{GL:04} and the similarity of $f_a$, we know that $f_a^{-1}(H(a))\in C_{T_a}(\nu_{G(a)})$. From Lemma \ref{lem3} (i), we have that $n_1\leq T_a\leq n_5$. Because of Lemma \ref{lem3} (ii), we may apply Lemmas \ref{lem11}, \ref{l1} to the measure $\nu_{G(a)}$. We define
\[
\underline{d}:=\min_{2\leq h \leq n_5}\underline{d}_h,\;\overline{d}:=\max_{2\leq h \leq n_5}\overline{d}_h,\;d_3:=\min_{2\leq h \leq n_5}g_h.
\]
Thus, using the similarity of $f_a$ and Lemmas \ref{lem11} and \ref{l1}, we obtain
\[
\mu(G(a))\underline{d}\leq\mu(P_a(\alpha_n))=\mu(G(a))\nu_{G(a)}\big(P_{f_a^{-1}(a)}(f_a^{-1}(H(a)))\big)\leq \mu(G(a))\overline{d}.
\]
By Lemma \ref{lem5}, (\ref{compare11}) and (\ref{g2}), we have
\begin{eqnarray*}
&&\mu(G(a))\leq \eta_3|A_{\tau_0}|^{s_0}\leq\eta_3C_1^{-1}2^{s_0}\eta_2\phi_k^{-1}\leq n_3\eta_2\eta_3\eta_4^{-1}n^{-1};\\
&&\mu(G(a))\geq \mu(A_{\tau_0})\geq\eta_1\phi_k^{-1}\geq (n_0+n_2)\eta_1n^{-1}.
\end{eqnarray*}
It suffices to define $d_1:=\underline{d}\eta_1(n_0+n_2)$ and $d_2:=\overline{d}n_3\eta_2\eta_3\eta_4^{-1}$.

By Lemma \ref{l2}, we know that for every $b\in H(a)\setminus\{a\}$,
\[
d(f_a^{-1}(b),f_a^{-1}(a))\geq 3d_3|P_{f^{-1}(a)}(f_a^{-1}(H(a)))\cap K_{\nu_{G(a)}}|.
 \]
Thus, using the similarity of $f_a$, we obtain that
\[
\min_{b\in H(a)\setminus\{a\}}d(a,b)\geq 3d_3|P_a(\alpha_n)\cap K_\mu|.
\]
By Lemma \ref{lem8}, we know that $|P_a(\alpha_n)\cap K_\mu|\leq 2\delta|A_\sigma|$.
On the other hand, for every $b\in\alpha_n\setminus H(a)$, there exists some $\tau\in\Omega_k\setminus\mathcal{A}_\sigma$ such that $b\in (A_\tau)_{\delta|A_\tau|}$. Note that $(A_\tau)_{2\delta|A_\tau|}\cap(A_\sigma)_{2\delta|A_\sigma|}=\emptyset$ and $a\in (A_\sigma)_{\delta|A_\sigma|}$, we deduce
\[
d(b,a)\geq2\delta|A_\tau|=2\delta|A_\sigma|\geq |P_a(\alpha_n)\cap K_\mu|.
\]
Note that $3d_3<2^{-1}$. It follows that $d(b,a)\geq 3d_3|P_a(\alpha_n)\cap K_\mu|$ for every $b\in\alpha_n\setminus\{a\}$. Thus, the set $P_a(\alpha_n)$ contains a closed ball of radius $d_3|P_a(\alpha_n)\cap K_\mu|$ which is centered at $a$.


\end{document}